\newcommand{\e}{\operatorname{e}}
\newcommand{\rd}{\mathrm{d}}
\newcommand{\re}{\operatorname{Re}}
\newcommand{\im}{\operatorname{Im}}
\newcommand{\I}{\mathrm{i}}
\DeclareMathOperator{\Span}{span}
\DeclareMathOperator{\tr}{tr}
\newtheorem{theorem}{Theorem}
\newtheorem{remark}{Remark}[section]
\newtheorem{lemma}[remark]{Lemma}
\theoremstyle{definition}
\newtheorem{definition}[remark]{Definition}
\newtheorem{example}[remark]{Example}
\begin{document}

\title{Lattices of oscillator groups}
\author{Mathias Fischer}
\maketitle

\begin{abstract}
This paper is concerned with discrete, uniform subgroups (lattices) of oscillator groups, which are certain semidirect products of the Heisenberg group and the additive group $\mathds{R}$ of real numbers.

The present paper rectifies the uncertainties in \cite{MR85} of Medina and Revoy and gives a complete classification of the lattices of the $4$-dimensional oscillator group up to isomorphism.


%
%
%
\end{abstract}

\section{Introduction}

Oscillator groups are certain semidirect products of the Heisenberg group and the additive group $\mathds{R}$. They are interesting because they are the only simply connected solvable Lie groups, besides the abelian ones, which have a biinvariant Lorentzian metric. In addition, the quotient of an oscillator group by a lattice gives an example of a compact homogeneous Lorentzian manifold.

In \cite{MR85}, Medina and Revoy already classified the lattices of the oscillator group. But unfortunately the given maps in \cite[p. 92]{MR85} aren't necessarily automorphisms, different from the assumption.
Hence, theorem III and the following corollary is incorrect.

Our goal is to classify the lattices of the $4$-dimensional oscillator group.

More precisely we consider groups $Osc_n(\omega,B)$, i. e. $\mathds{R}\times\mathds{R}^{2n}\times\mathds{R}$ with the group multiplication given by \[(z,\xi,t)(v,\eta,s)=\left(z+v+\frac{1}{2}\omega(\xi,\e^{tB}\eta),\xi+\e^{tB}\eta,t+s\right).\]
Here $B$ is an invertible $2n\times 2n$-matrix and $\omega$ a symplectic form on $\mathds{R}^{2n}$ such that $\omega(B\xi,\eta)=-\omega(\xi,B\eta)$ for all $\xi,\eta\in\mathds{R}^{2n}$ and $\omega(B\cdot,\cdot)$ is definite. Every oscillator group is isomorphic to some $Osc_n(\omega,B)$. In addition, these groups are isomorphic to the groups $G_k(\lambda)$ considered in \cite{MR85}.

We compute the automorphisms of $Osc_n(\omega,B)$ in Theorem \ref{theoremAutOsc}. The theorem also rectifies the assertion in \cite[p. 92]{MR85}.
Afterwards we classify the lattices of $Osc_1(\omega,B)$ in three steps  (Theorem \ref{theoremLCapHeis=GammaR}-\ref{Klassifik}).
In Theorem \ref{theoremLCapHeis=GammaR}, which also holds for lattices in $Osc_n(\omega,B)$, we get to know that we can always assume 
that a lattice of a $4$-dimensional oscillator group is generated by $\{(1,0,0),(0,e_i,0)\mid i=1,2\}$ and some 4th element $(0,\xi_0,1)$ in an oscillator group $Osc_1(\omega_r,B)$ with a certain standard symplectic form $\omega_r$.
Furthermore, we see in Theorem \ref{Klassifik2} that there is also, besides $\omega_r$, a unique standard matrix, denoted by $B=\lambda B_{x,y}$.
Then, the last step is to describe all lattices of the special kind we get in Theorem \ref{Klassifik2} up to automorphisms in Theorem \ref{Klassifik}. This gives restrictions for $\xi_0$.
Our main tools for these steps are special isomorphisms and automorphisms of oscillator groups preserving the subgroup $\langle(1,0,0),(0,e_i,0)\mid i=1,2\rangle$, we characterize in Lemma \ref{lemmaIsoOscMitGitter} and Lemma \ref{Autos}.


Finally, each lattice $L$ in $Osc_1(\omega,B)$ gives a data $(r,\lambda,(x,y),\xi_0)$.
Thereby $r$ is a positive integer, $\lambda$ a certain angle, $(x,y)$ an element of the "half fundamental domain" for the modular group and $\xi_0$ a vector from a finite set to be extracted from the list in Appendix \ref{AppA}.

This data describes the lattices of the oscillator group in the sense that two lattices $L_1$ and $L_2$ of $Osc_1(\omega,B)$ have the same data if and only if there is an automorphism in $Osc_1(\omega,B)$ mapping $L_1$ onto $L_2$.




\section{Oscillator groups}

\begin{definition}\label{DefOscGr}
For a symplectic form $\omega$ on $\mathds{R}^{2n}$, let $H_n(\omega)$ denote the group $\mathds{R}\times\mathds{R}^{2n}$ with the multiplication given by \[(z,\xi)(v,\eta)=\big(z+v+\frac{1}{2}\omega(\xi,\eta),\xi+\eta\big).\]
Since each of these groups are isomorphic we call them Heisenberg groups.\\
Let $H$ be a Heisenberg group, $\mathfrak{h}$ its Lie algebra and $\mathfrak{z}$ the center of $\mathfrak{h}$. Suppose that $p$ is a one-parameter subgroup of the automorphism group of $H$ with trivial action on the center of the Heisenberg group and satisfying that the map
\begin{align*}\label{A}
A:\mathfrak{h}/\mathfrak{z}\times\mathfrak{h}/\mathfrak{z}\rightarrow\mathfrak{z}\cong\mathds{R},\quad A(h_1+\mathfrak{z},h_2+\mathfrak{z}):=\big[((\rd_0f)(1))(h_1),h_2\big]
\end{align*}
is definite, where $f:\mathds{R}\rightarrow Aut(\mathfrak{h})$ is the differential of the automorphism $p(t)\in Aut(H)$ at the point $0\in\mathfrak{h}$, i. e. $f(t):=\rd_0(p(t))$.
Then the semidirect product $H\rtimes_p\mathds{R}$ of $H$ and $\mathds{R}$ with respect to $p$ is called oscillator group.
\end{definition}

Later, we will see that these groups are isomorphic to the groups $G_k(\lambda)$ introduced in \cite{MR85}.

\begin{lemma}\label{ficken} 
Let $p$ be a one-parameter group of the automorphisms of $H_n(\omega)$. Then $p$ satisfies the conditions in Definition \ref{DefOscGr} if and only if there is a $\delta\in\mathds{R}^{2n}$ and a $B\in\mathfrak{gl}(2n,\mathds{R})$ satisfying $\omega(B\xi,\eta)=-\omega(\xi,B\eta)$ for all $\xi,\eta\in\mathds{R}^{2n}$
    and $\omega(B\cdot,\cdot)$ is 
					definite, such that 
               \[p(t)=\exp\left(t\begin{pmatrix}
                                0&\delta^T\\
                         		  0&B
                      \end{pmatrix}\right).\]
If $B$ satisfies $\omega(B\xi,\eta)=-\omega(\xi,B\eta)$ for all $\xi,\eta\in\mathds{R}^{2n}$
    and $\omega(B\cdot,\cdot)$ is definite, then $B$ has purely imaginary eigenvalues and can be diagonalized over $\mathds{C}$.
\end{lemma}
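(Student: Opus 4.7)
The plan is to first translate from one-parameter subgroups of $\mathrm{Aut}(H_n(\omega))$ to derivations of the Lie algebra $\mathfrak{h}$, and then to observe that the hypothesis ``$\omega(B\cdot,\cdot)$ definite'' endows $\mathds{R}^{2n}$ with a Euclidean inner product for which $B$ is skew.

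First I would use that $H_n(\omega)$ is $1$-connected and nilpotent, so $\mathrm{Aut}(H_n(\omega))\cong\mathrm{Aut}(\mathfrak{h})$ and every one-parameter subgroup has the form $p(t)=\exp(tX)$ for some derivation $X$ of $\mathfrak{h}$. Because the center $\mathfrak{z}$ of $\mathfrak{h}$ equals $[\mathfrak{h},\mathfrak{h}]$, it is preserved by every derivation, and the hypothesis that $p(t)$ act trivially on the center forces $X|_{\mathfrak{z}}=0$. Writing $X$ in block form with respect to the splitting $\mathfrak{h}=\mathfrak{z}\oplus\mathds{R}^{2n}$ then yields precisely the shape in the statement, with $\delta\in\mathds{R}^{2n}$ and $B\in\mathfrak{gl}(2n,\mathds{R})$ as yet unconstrained. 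Imposing the derivation identity on pairs $(0,\xi),(0,\eta)$ I get that the left-hand side vanishes (as $X$ kills the center), while the right-hand side simplifies to $\bigl(\omega(B\xi,\eta)+\omega(\xi,B\eta),0\bigr)$, which is exactly the $\omega$-skew-adjointness of $B$. A short computation, using that the exponential of $\mathfrak{h}$ is the identity under the chosen identification, yields $A(\xi,\eta)=\omega(B\xi,\eta)$, so the definiteness of $A$ in Definition~\ref{DefOscGr} translates verbatim to the definiteness of $\omega(B\cdot,\cdot)$. The converse direction then amounts to verifying that every $X$ of the prescribed block form with $B$ satisfying these two conditions is a derivation, which is routine.

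For the last assertion I would set $g(\xi,\eta):=\omega(B\xi,\eta)$. The $\omega$-skew-adjointness of $B$ together with the antisymmetry of $\omega$ gives $g(\xi,\eta)=g(\eta,\xi)$, so $g$ is symmetric, and by hypothesis it is definite; hence $g$ is (up to sign) a Euclidean inner product on $\mathds{R}^{2n}$. Applying $\omega$-skew-adjointness twice yields $\omega(B^2\xi,\eta)=\omega(\xi,B^2\eta)$, from which one reads off $g(B\xi,\eta)=-g(\xi,B\eta)$; thus $B$ is skew-adjoint for $g$, hence normal with purely imaginary spectrum and diagonalizable over $\mathds{C}$. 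The only nontrivial step is the identification of this Euclidean structure on which $B$ acts skew-adjointly; the spectral conclusion is then the standard structure theorem for real skew-symmetric operators on a Euclidean space.
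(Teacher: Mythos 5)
Your proposal is correct. For the first assertion you and the paper do essentially the same thing: the paper simply cites the known description of $\mathrm{Aut}(H_n(\omega))$ (Tolimieri) and says the equivalence "follows directly from the definition," while you spell out the reduction to derivations of $\mathfrak{h}$, the block form forced by triviality on the center, and the identification $A(\xi+\mathfrak{z},\eta+\mathfrak{z})=\omega(B\xi,\eta)$; all of these computations check out (in particular $[Xh_1,h_2]=\omega(B\xi,\eta)Z$ for $X=\left(\begin{smallmatrix}0&\delta^T\\0&B\end{smallmatrix}\right)$). For the spectral assertion your route is genuinely different and arguably cleaner: you observe that $g:=\omega(B\cdot,\cdot)$ is symmetric (by skew-adjointness plus antisymmetry of $\omega$) and definite, hence a Euclidean inner product up to sign, and that $g(B\xi,\eta)=\omega(B^2\xi,\eta)=-\omega(B\xi,B\eta)=-g(\xi,B\eta)$ makes $B$ skew-adjoint for $g$, so the standard real spectral theorem gives purely imaginary eigenvalues and complex diagonalizability in one stroke. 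The paper instead complexifies $B$ and $\omega$, evaluates $\omega(Bz,\overline z)$ on eigenvectors to get imaginary eigenvalues, and proves diagonalizability by an explicit induction on $n$ using $\omega$-orthogonal complements of $\mathrm{span}\{Z_1,\overline{Z_1}\}$ --- essentially re-deriving by hand the spectral theorem you invoke. Your version buys brevity at the cost of appealing to a (standard, citable) structure theorem; the paper's induction is self-contained. Both are complete proofs.
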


\begin{proof}
The equivalence follows directly from the definition and known results for automorphisms of the Heisenberg group (see for instance \cite{To78}).\\
Let $\omega(B\cdot,\cdot)$ be positive definite (otherwise we consider -$\omega(B\cdot,\cdot)$) and consider the complexification of $B$ and the complex bilinear extension of $\omega$.
One can check that $\omega(Bz,\overline{z})\in\mathds{R}$ and $\omega(Bz,\overline z)>0$ for all $z\in \mathds{C}^{2n}$.
Suppose $z$ is an eigenvector of $B$ with its corresponding eigenvalue $\lambda\neq 0$, then we see $\mathds{R}\ni\omega(Bz,\overline z)=\lambda\omega(z,\overline z)$.
On the other hand $\omega(z,\overline z)\in\I\mathds{R}$. Hence, $\lambda\in\I\mathds{R}$ and finally the first part follows.\\
Now, we prove the second part by induction over $n$. For $n=1$ the assertion holds.
Assuming the assertion holds for $n$, we will prove it for $n+1$.
Let $Z_1$ be an eigenvector of $B$ over $\mathds{C}$ with corresponding eigenvalue $\I\lambda_1$, $\lambda\neq 0$.
Then $\omega(Z_1,\overline{Z_1})=\frac{1}{\I\lambda}\omega(BZ_1,\overline{Z_1})\neq 0$. In particular $\omega|_{\Span\left\{Z_1,\overline{Z_1}\right\}}$ is nondegenerate.
Also $\overline{Z_1}$ is an eigenvector of $B$ with corresponding eigenvalue $-\I\lambda_1$, so $B$ maps the subspace $\Span\left\{Z_1,\overline{Z_1}\right\}$ onto itself. 
Since $B$ is antisymmetric with respect to $\omega$, $B$ maps $\Span\left\{Z_1,\overline{Z_1}\right\}^\perp$ into itself. 
Making use of the induction hypothesis on $B\big|_{\Span\left\{Z_1,\overline{Z_1}\right\}^\perp}$ yields the assertion.
\end{proof}

\begin{remark}
The Lie algebra of the oscillator group $H_n(\omega)\rtimes_{p}\mathds{R}$ is the semidirect sum of $\mathfrak{h}_n(\omega)$ and $\mathds{R}$ with respect to the derivation
$s \mapsto s\left(\begin{smallmatrix}
					0 & \delta^T \\
  	      	   0 & B
   	   	 \end{smallmatrix}\right).$\\
Hence, there is a basis $\left\{X_1,Y_1,\dots,X_n,Y_n\right\}$ of $\mathds{R}^{2n}$ satisfying $\omega(X_i,Y_j)=\delta_{i,j}$, ($\delta_{i,j}$ denotes the Kronecker symbol) and $\omega(X_i,X_j)=\omega(Y_i,Y_j)=0$ for i,j=1,\dots,n such that the Lie algebra of the oscillator group $H_n(\omega)\rtimes_{p}\mathds{R}$ is \[\mathds{R}Z\oplus\Span\left\{ X_1,Y_1,\dots,X_n,Y_n\right\}\oplus\mathds{R}T\]
with the non-zero brackets of $\big\{Z,X_1,Y_1,\dots,X_n,Y_n,T\big\}$ given by
\[[X_i,Y_i]=Z,\quad [T,X_i]=BX_i+\delta_{2i-1}Z,\quad [T,Y_i]=BY_i+\delta_{2i}Z\]
for $i=1,$ $\dots,$ $n.$ Here $\delta_i$ denotes the $i$-th. component of $\delta$.
\end{remark}

\begin{definition}
The Lie group $Osc_n(\omega,B)$ is the oscillator group $H_n(\omega)\rtimes_p\mathds{R}$, where 
$p(t)=\left(\begin{smallmatrix}
                 1    &   0  \\
                 0    &   \e^{tB}
\end{smallmatrix}\right)$, $\omega(B\cdot,\cdot)=-\omega(\cdot,B\cdot)$ and $\omega(B\cdot,\cdot)$ definite.
\end{definition}

\begin{remark}
Each oscillator group $H_n(\omega)\rtimes_{p}\mathds{R}$ is isomorphic to some $Osc_n(\omega,B)$, since the map
\[\phi: T\mapsto T+\sum_{j=1}^n(\delta_{2j}X_j-\delta_{2j-1}Y_j),\quad
    X_i\mapsto X_i,\quad    Y_i\mapsto Y_i,\quad    Z\mapsto Z\]
is an isomorphism from the Lie algebra of $H_n(\omega)\rtimes_{p}\mathds{R}$ to $\mathfrak{osc}_n(\omega,B)$, where 
$p(t)=\exp\left(t\left(\begin{smallmatrix}
                         0&\delta^T\\
                         0&B
                      \end{smallmatrix}\right)\right)$, $\omega(B\cdot,\cdot)=-\omega(\cdot,B\cdot)$ and $\omega(B\cdot,\cdot)$ definite.
I. e., we can always assume that $\delta=0$.
\end{remark}

The group multiplication in $Osc_n(\omega,B)$ is given by
\[(z,\xi,t)(v,\eta,s)=\left(z+v+\frac{1}{2}\omega(\xi,\e^{tB}\eta),\xi+\e^{tB}\eta,t+s\right),\]
and inversion by \[(z,\xi,t)^{-1}=(-z,-\e^{-tB}\xi,-t),\]
where $\xi,\eta\in\mathds{R}^{2n}$ and $z,v,s,t\in\mathds{R}$.

\begin{definition}
\newcommand*{\tempb}{\multicolumn{1}{|c}{}}
\newcommand*{\temp}{\multicolumn{1}{c|}{}}
We also define for $\lambda=(\lambda_1,\dots,\lambda_n)$, $\lambda_i\in\mathds{R}$ the matrix \[N_{\lambda}:=\left(\begin{array}{ccc} 
\footnotesize \begin{array}{rc} 0 & -\lambda_1 \\ \lambda_1 & 0\end{array} & \tempb & \mathbf{0}\\ \cline{1-1}
 & \ddots & \\ \cline{3-3}
\mathbf{0} & \temp & \footnotesize \begin{array}{rc} 0 & -\lambda_n \\ \lambda_n & 0\end{array}
\end{array}\right)\]
and the symplectic form $\omega_{\lambda}(\xi,\eta):=\xi^TN_{-\lambda}\eta$.
We denote $Osc_n(\omega_{(1,\dots,1)},N_{\lambda})$ by \linebreak $Osc(\lambda_1,\dots,\lambda_n)$, where $\lambda_i\lambda_{i+1}>0$ for $i=1,\dots n-1$.
\end{definition}

\begin{lemma}\label{Iso00}
The oscillator groups $Osc_1(\omega,B)$ and $Osc(\lambda_1,\dots,\lambda_n)$, where $0<\lambda_1\leq\dots\leq\lambda_n$ denote the positive imaginary parts of the eigenvalues of $B$ with multiplicity, are isomorphic.
\end{lemma}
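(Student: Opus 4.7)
My plan is to construct an $\mathds{R}$-linear isomorphism $\Phi\colon\mathds{R}^{2n}\to\mathds{R}^{2n}$ that simultaneously transports $\omega$ to $\omega_{(1,\dots,1)}$ and $B$ to $N_{(\lambda_1,\dots,\lambda_n)}$; once this is in hand, the assignment $(z,\xi,t)\mapsto(z,\Phi^{-1}\xi,t)$ is seen directly from the explicit multiplication formula to be a Lie group isomorphism $Osc_n(\omega,B)\to Osc(\lambda_1,\dots,\lambda_n)$, using $\Phi^{-1}\circ\e^{tB}=\e^{tN_\lambda}\circ\Phi^{-1}$.

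The ingredients for $\Phi$ are already present in the proof of Lemma~\ref{ficken}. That inductive argument provides a pairwise $\omega$-orthogonal decomposition $\mathds{R}^{2n}=\bigoplus_{j=1}^n V_j$ into $B$-invariant real $2$-planes $V_j=\Span_\mathds{R}\{X_j,Y_j\}$, where $Z_j:=X_j-\I Y_j\in\mathds{C}^{2n}$ is a complex eigenvector of $B$ for the eigenvalue $\I\lambda_j$. Matching real and imaginary parts in $BZ_j=\I\lambda_j Z_j$ gives
\[BX_j=\lambda_j Y_j,\qquad BY_j=-\lambda_j X_j,\]
which is exactly the action of the $j$-th block of $N_{(\lambda_1,\dots,\lambda_n)}$. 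Short computations also yield $\omega(Z_j,\overline{Z_j})=2\I\,\omega(X_j,Y_j)$ and $\omega(BZ_j,\overline{Z_j})=-2\lambda_j\,\omega(X_j,Y_j)$, so the sign of $\omega(X_j,Y_j)$ is the same for all $j$ and is controlled by the definite form $\omega(B\cdot,\cdot)$.

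After replacing $\omega$ by $-\omega$ if necessary (which yields an isomorphic oscillator group via $(z,\xi,t)\mapsto(-z,\xi,t)$), I may assume $\omega(X_j,Y_j)>0$ for every $j$; a real positive rescaling of each $Z_j$ then normalizes $\omega(X_j,Y_j)=1$ without disturbing the eigenvector relations above. Combined with the mutual $\omega$-orthogonality of the $V_j$, this makes $\{X_1,Y_1,\dots,X_n,Y_n\}$ a symplectic basis for $\omega_{(1,\dots,1)}$ in which $B$ has matrix $N_{(\lambda_1,\dots,\lambda_n)}$; declaring $\Phi$ to send this basis to the standard basis of $\mathds{R}^{2n}$ completes the construction. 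The main obstacle is the sign bookkeeping, i.e.\ synchronising the sign of $\omega$, the choice of eigenvalue $+\I\lambda_j$ versus $-\I\lambda_j$, and the labelling of the real and imaginary parts of $Z_j$ so that $B$ ends up as $+N_\lambda$ rather than $-N_\lambda$ and $\omega$ as $+\omega_{(1,\dots,1)}$; the ordering $\lambda_1\le\dots\le\lambda_n$ is then achieved simply by permuting the blocks.
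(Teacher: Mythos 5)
Your proposal is correct and follows essentially the same route as the paper: both extract the complex eigenbasis $\{Z_j,\overline{Z_j}\}$ from the proof of Lemma \ref{ficken}, rescale using the value of $\omega(Z_j,\overline{Z_j})$ to obtain a symplectic basis in which $B$ becomes $\pm N_\lambda$, and then absorb the residual sign. The only cosmetic difference is that the paper works on the Lie algebra and absorbs the sign via $T\mapsto -T$, whereas you work on the group and absorb it via $\omega\mapsto-\omega$ (i.e.\ $(z,\xi,t)\mapsto(-z,\xi,t)$) together with the convention $Z_j=X_j-\I Y_j$.
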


\begin{remark}
Note that $Osc(\lambda_1,\dots,\lambda_n)$ is isomorphic to $G_k(\lambda)$ in \cite{MR85}.
\end{remark}

\begin{proof}[Proof of the lemma]
Suppose that $\omega(B\cdot,\cdot)$ is positive definite
(similarly for $\omega(B\cdot,\cdot)$ negative definite).\linebreak
Let $\left\{Z_1,\overline{Z_1},\dots,Z_n,\overline{Z_n}\right\}$ be a basis of eigenvectors as in the proof of Lemma \ref{ficken}, such that $BZ_j=\I\lambda_jZ_j$ with $\lambda_j>0$ and $\lambda_1\leq\dots\leq\lambda_n$. We set $\mu_j:=\frac{\I}{2}\omega(Z_j,\overline{Z_j})=\frac{\I}{2\lambda_j}\omega(BZ_j,\overline{Z_j})>0$.
Then \[Z\mapsto Z,\quad\frac{1}{\sqrt{\mu_j}}\re(Z_j)\mapsto X_j,\quad \frac{1}{\sqrt{\mu_j}}\im(Z_j)\mapsto Y_j,\quad T\mapsto -T\] is an isomorphism from $\mathfrak{osc}_n(\omega,B)$ to $\mathfrak{osc}(\lambda_1,\dots,\lambda_n)$.
\end{proof}

\begin{lemma} \label{IsoO} (Medina/Revoy: \cite[p. 91]{MR85})\\
Let $0<\lambda_1\leq\dots\leq\lambda_n$ and $0<\lambda'_1\leq\dots\leq\lambda'_n$. Then $Osc(\lambda_1,\dots,\lambda_n)$ and $Osc(\lambda'_1,\dots,\lambda'_n)$ are isomorphic if and only if there is a $k\in\mathds{R}\backslash\{0\}$, such that $k\lambda'_i=\lambda_i$ for $i=1,\dots,n.$
\end{lemma}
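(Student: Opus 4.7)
For the ``if'' direction, I would simply rescale the $t$-coordinate. Given $k\neq 0$ with $k\lambda'_i=\lambda_i$ for all $i$, define $\phi(z,\xi,t):=(z,\xi,kt)$. Since $N_\lambda=kN_{\lambda'}$, one has $\e^{tN_\lambda}=\e^{(kt)N_{\lambda'}}$, and direct substitution into the multiplication formula $(z,\xi,t)(v,\eta,s)=\bigl(z+v+\tfrac12\omega(\xi,\e^{tB}\eta),\,\xi+\e^{tB}\eta,\,t+s\bigr)$ shows that $\phi$ is a group homomorphism; bijectivity is immediate from $k\neq 0$.

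For the ``only if'' direction, I would pass to the induced Lie algebra isomorphism $\rd\phi$ and read off $k$ from the spectrum of $\operatorname{ad}(T)$. First, two ideals are automatically preserved: the center $\mathds{R}Z$, and the Heisenberg subalgebra $\mathfrak{h}_n$. Indeed $\mathfrak{h}_n=[\mathfrak{osc},\mathfrak{osc}]$, because the brackets $[X_i,Y_i]=Z$, $[T,X_i]=\lambda_iY_i$, $[T,Y_i]=-\lambda_iX_i$ together with $\lambda_i>0$ generate all of $\mathds{R}Z\oplus\Span\{X_i,Y_i\mid i=1,\dots,n\}$. Hence $\rd\phi(T)=\beta T'+h'$ for some $\beta\neq 0$ and $h'\in\mathfrak{h}'_n$, and $\rd\phi$ descends to a linear isomorphism $\mathfrak{h}_n/\mathds{R}Z\to\mathfrak{h}'_n/\mathds{R}Z'$ that intertwines $\operatorname{ad}(T)$ with $\operatorname{ad}(\beta T'+h')$. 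Since $[h',\mathfrak{h}'_n]\subset\mathds{R}Z'$, the contribution of $h'$ vanishes on the quotient and the induced action equals $\beta\cdot\operatorname{ad}(T')$.

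Consequently the complex spectra must match as multisets, $\{\pm\I\lambda_j\}_{j=1}^n=\{\pm\I\beta\lambda'_j\}_{j=1}^n$. Comparing the sorted sequences of absolute values of imaginary parts---which absorbs the $\pm$ from each conjugate pair and uses the assumed monotone orderings $0<\lambda_1\leq\dots\leq\lambda_n$ and $0<\lambda'_1\leq\dots\leq\lambda'_n$---forces $\lambda_j=|\beta|\lambda'_j$ for every $j$, so $k:=|\beta|$ is the desired scalar. The only delicate point in the argument is this sign-and-ordering bookkeeping; once the derived subalgebra has been identified with the Heisenberg ideal, the rest is routine.
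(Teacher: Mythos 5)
Your argument is correct and follows essentially the same route as the paper: the ``only if'' direction identifies the derived algebra with the Heisenberg ideal and observes that the adjoint action of a complement on $[\mathfrak{osc},\mathfrak{osc}]/\mathfrak{z}$ is determined by the algebra structure only up to a nonzero scalar, then compares (sorted) spectra; the ``if'' direction rescales $T$ (you do this at the group level, the paper writes the corresponding Lie algebra isomorphism $T\mapsto kT$, which is an immaterial difference). Your write-up is in fact somewhat more explicit than the paper's about why $\hat{B}$ is well defined up to the scalar $\beta$ and about the sign/ordering bookkeeping.
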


\begin{proof}
Let $\mathfrak{osc}$ denote the Lie algebra of $Osc(\lambda_1,\dots,\lambda_n)$.
We choose $b+[\mathfrak{osc},\mathfrak{osc}]\in \nicefrac{\mathfrak{osc}}{[\mathfrak{osc},\mathfrak{osc}]},$ $b\notin [\mathfrak{osc},\mathfrak{osc}]$, and assign to this element a linear operator $\hat{B}$ of $\nicefrac{[\mathfrak{osc},\mathfrak{osc}]}{\mathfrak{Z}}$, by
\begin{align*}
\hat{B}(x+\mathfrak{Z})&=[b,x]+\mathfrak{Z}.
\end{align*}
The operator $\hat{B}$ and therefore also its eigenvalues are uniquely determined by the structure of the Lie algebra up to a factor $k\neq 0$.
On the other hand, eigenvalues of $\hat{B}$ associated with $b=(0,0,1)^T$ are exactly the eigenvalues of $N_\lambda$.

Hence, if there is no $k\in\mathds{R}\backslash\{0\}$, such that $\lambda_i=k\lambda'_i$ for $i=1,\dots,n,$, then $Osc(\lambda_1,\dots,\lambda_n)$ and $Osc(\lambda'_1,\dots,\lambda'_n)$ are not isomorphic.

For the other implication note that the map
\[\varphi:X_i\mapsto X_i,\,Y_i\mapsto Y_i,\,Z\mapsto Z,\,T\mapsto kT\]
is an isomorphism of Lie algebras from $\mathfrak{osc}(\lambda_1,\dots,\lambda_n)$ to $\mathfrak{osc}(\lambda'_1,\dots,\lambda'_n)$, where \linebreak $\left\{X_1,Y_1,\dots,X_n,Y_n,Z,T\right\}$ is the standard basis of $\mathds{R}^{2n+2}$.
\end{proof}

Thus, each oscillator group of dimension $2n+2=4$ is isomorphic to $Osc(1)$.

\begin{theorem} \label{theoremAutOsc}
A map $\phiup:Osc_n(\omega, B)\rightarrow Osc_n(\omega, B)$ is an automorphism if and only if there are numbers $m\in\mathds{R}$, $\mu\in\left\{+1,-1\right\}$ and $a\in\mathds{R}\backslash\{0\}$, a vector $b\in \mathds{R}^{2n}$ and a matrix $S\in GL(2n,\mathds{R})$ with $S^*\omega=a\omega$ and $SB=\mu BS$, such that
\begin{align}\label{AutOsc}
\phiup(z,\xi,t)=\left(a z+\frac{1}{2}\omega(S\xi,\e^{t\mu B}b+b)+mt+\frac{1}{2}\omega(\e^{t\mu B}b,b),S\xi+\e^{t\mu B}b-b,\mu t\right).
\end{align}
\end{theorem}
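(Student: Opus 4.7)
My plan is to prove both directions. For the ``if'' direction I would expand both sides of $\phi((z,\xi,t)(v,\eta,s))=\phi(z,\xi,t)\phi(v,\eta,s)$ using the group law in $Osc_n(\omega,B)$ and the defining formula \eqref{AutOsc}. Three ingredients make the bookkeeping work: the symplectic scaling $S^*\omega=a\omega$; the intertwining relation $Se^{tB}=e^{t\mu B}S$, obtained by exponentiating $SB=\mu BS$; and the fact that each $e^{tB}$ is symplectic with respect to $\omega$, which follows from the $\omega$-antisymmetry of $B$. Under these, the cross term $\tfrac12\omega(\Xi_1,e^{\mu tB}\Xi_2)$ on the product side breaks into pieces that exactly match the $\tfrac{a}{2}\omega(\xi,e^{tB}\eta)$ term and the quadratic-in-$b$ terms produced when applying $\phi$ to $(z,\xi,t)(v,\eta,s)$. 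Bijectivity then follows by exhibiting an inverse of the same form.

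For the ``only if'' direction I would pass to the Lie algebra. Since $Osc_n(\omega,B)$ is connected and simply connected, $\phi$ is determined by its differential $\tilde\phi=d\phi_e$, a Lie algebra automorphism of $\mathfrak{osc}_n$. Two canonical subalgebras are preserved: the center $\mathfrak{z}=\mathds{R}Z$ (using nondegeneracy of $\omega$ and invertibility of $B$) and the derived subalgebra $[\mathfrak{osc}_n,\mathfrak{osc}_n]=\mathds{R}Z\oplus\mathds{R}^{2n}$, which is the Heisenberg subalgebra. Hence $\tilde\phi(Z)=aZ$ for some $a\neq 0$, and $\tilde\phi$ descends to the one-dimensional quotient $\mathfrak{osc}_n/[\mathfrak{osc}_n,\mathfrak{osc}_n]\cong\mathds{R}T$ as multiplication by some $\mu\neq 0$.

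Write $\tilde\phi(\xi)=c(\xi)Z+S\xi$ for $\xi\in\mathds{R}^{2n}$ and $\tilde\phi(T)=\alpha Z+\beta+\mu T$. Transporting $[X,Y]=\omega(X,Y)Z$ forces $S^*\omega=a\omega$, and using nondegeneracy of $\omega$ together with invertibility of $S$ one finds a unique $b\in\mathds{R}^{2n}$ with $c(\xi)=\omega(S\xi,b)$. Transporting $[T,\xi]=B\xi$ then yields $SB=\mu BS$ on the $\mathds{R}^{2n}$-part and $\beta=\mu Bb$ on the $\mathfrak{z}$-part. The constraint $\mu\in\{\pm 1\}$ comes from spectral analysis: $SB=\mu BS$ with $S$ invertible means that $B$ and $\mu B$ are similar, hence cospectral; by Lemma \ref{ficken} the spectrum of $B$ is $\{\pm\I\lambda_j:j=1,\dots,n\}$ with $\lambda_j>0$, so matching positive imaginary parts via a permutation $\sigma$ of $\{1,\dots,n\}$ gives $|\mu|\lambda_j=\lambda_{\sigma(j)}$, hence $|\mu|^n=1$ and thus $\mu=\pm 1$.

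To finish, set $m:=\alpha-\tfrac{\mu}{2}\omega(Bb,b)$ and check that the map defined by \eqref{AutOsc} with parameters $(a,m,\mu,b,S)$ has differential at the identity equal to $\tilde\phi$ (a short direct calculation using $\exp(tT)=(0,0,t)$); by uniqueness of the lift of a Lie algebra automorphism to the simply connected group, $\phi$ coincides with this map. The main obstacle I anticipate is the calculational bookkeeping in the ``if'' direction, where several mixed quadratic terms in $b$ and cross terms involving $e^{tB}$ versus $e^{t\mu B}$ must conspire to cancel; the ``only if'' direction, by contrast, reduces to a clean structural analysis of $\mathfrak{osc}_n$ once its center and derived subalgebra are identified.
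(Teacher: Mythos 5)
Your proposal is correct, but it takes a genuinely different route from the paper. The paper argues entirely at the group level: it writes $\phiup(0,0,t)=(z(t),\xi(t),\mu(t))$, extracts $\mu(t)=\mu t$ and $\xi(t)=\e^{\mu tB}b-b$ by differentiating the resulting functional equation and solving an ODE, determines $z(t)$ by comparing first components, then uses that $H_n(\omega)$ is the commutator subgroup together with Tolimieri's description of Heisenberg automorphisms to get $\phiup(z,\xi,0)=(az+\delta^T\xi,S\xi,0)$, and finally exploits the commutation identity $\phiup(z,\e^{tB}\xi,0)\phiup(0,0,t)=\phiup(0,0,t)\phiup(z,\xi,0)$ to obtain $SB=\mu BS$, the constraint $\mu^{2n}=1$ via determinants, and the relation tying $\delta$ to $b$. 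You instead linearize: you identify the center $\mathds{R}Z$ and the derived subalgebra $\mathds{R}Z\oplus\mathds{R}^{2n}$ of $\mathfrak{osc}_n(\omega,B)$ (both identifications are correct, using nondegeneracy of $\omega$ and invertibility of $B$), read off the block form of $d\phiup_e$ from the brackets $[\xi,\eta]=\omega(\xi,\eta)Z$ and $[T,\xi]=B\xi$, and lift back via simple connectedness after matching differentials with the candidate map \eqref{AutOsc}; I checked that your normalization $m=\alpha-\tfrac{\mu}{2}\omega(Bb,b)$ is exactly what makes the $T$-derivatives agree. Your spectral argument for $\mu=\pm1$ is valid (a permutation matching of the positive imaginary parts forces $|\mu|^n=1$), though the paper's determinant argument $\det(SB)=\mu^{2n}\det(BS)$ is shorter and you could adopt it verbatim. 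What your approach buys is independence from the cited classification of Heisenberg group automorphisms and from the ODE manipulation, replaced by standard structure theory of the Lie algebra plus the automorphism-lifting theorem for simply connected groups; what the paper's approach buys is that it never leaves the group and produces the global formula directly. Note that in both treatments the ``if'' direction remains an unexecuted (but routine) verification, and your lifting argument actually relies on it, since you need the map \eqref{AutOsc} to be an automorphism before you can invoke uniqueness of the lift.
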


\begin{proof}
One can check that a map satisfying condition (\ref{AutOsc}) is an automorphism.
So we only verify the other implication.
Let $\phiup$ be an automorphism.\\
First of all, we check how elements of $\{0\}\times\{0\}\times\mathds{R}$ will be mapped.
Therefore, suppose $\phiup(0,0,t)=(z(t),\xi(t),\mu(t))$.
Then 
\begin{align*}
\big(z(s+t),\xi(s+t),\mu(s+t)\big)&=\phiup(0,0,s)\phiup(0,0,t)\\
&=\big(z(s)+z(t)+\frac{1}{2}\omega(\xi(s),\e^{\mu(s)B}\xi(t)),\xi(s)+\e^{\mu(s)B}\xi(t),\mu(s)+\mu(t)\big).
\end{align*}
We notice that $\mu$ is a linear mapping, i.\ e. $\mu(t)=\mu t$ for some $\mu\in\mathbbm{R}\backslash\{0\}$.\\
Differentiating $\xi(s+t)=\xi(s)+\e^{\mu sB}\xi(t)$ with respect to $s$ and setting $s=0$, we get
\[\xi'(t)=\mu Bb+\mu B \xi(t),\]
where $\mu Bb=\xi'(0)$, $b\in\mathds{R}^{2n}$.
The solution of this ODE with $\xi(0)=0$ is \[\xi(t)=\e^{\mu t B}b-b.\]
Finally, the comparison of the first components shows that
\[z(s+t)+\frac{1}{2}\omega(b,\e^{(s+t)\mu B}b)=z(s)+\frac{1}{2}\omega(b,\e^{s\mu B}b)+z(t)+\frac{1}{2}\omega(b,\e^{t\mu B}b)\] and thus
\[z(t)=mt-\frac{1}{2}\omega(b,\e^{t\mu B}b)\] for some $m\in \mathds{R}$.

Since $H_n(\omega)$ is the commutator subgroup of $Osc_n(\omega,B)$, the automorphism $\phiup$ maps $H_n(\omega)$ onto itself.

Hence $\phiup(z,\xi,0)=(a z+\delta^T\xi,S\xi,0)$, where $S^*\omega=a\omega$ and $\delta\in\mathds{R}^{2n}$ (compare to \cite[p. 294]{To78}).

Thus $\phiup(z,\e^{tB}\xi,0)\phiup(0,0,t)=\phiup(0,0,t)\phiup(z,\xi,0)$ gives
\begin{align*}
&(\delta^T\e^{tB}\xi+a z+mt+\frac{1}{2}\omega(\e^{t\mu B}b,b)+\frac{1}{2}\omega(S\e^{tB}\xi,\e^{t\mu B}b-b),S\e^{tB}\xi+\e^{t\mu B}b-b,\mu t)\\
&=(mt+\frac{1}{2}\omega(\e^{t\mu B}b,b)+\delta^T\xi+a z+\frac{1}{2}\omega(\e^{t\mu B}b-b,\e^{\mu tB}S\xi),\e^{t\mu B}b-b+\e^{\mu t B}S\xi,\mu t).
\end{align*}
From the second component it follows, that $SB=\mu BS$. Since $\det(SB)=\mu^{2n}\det(BS)$, we get $\mu\in\left\{+1,-1\right\}$.

From the first component, we get $\delta^T\e^{tB}\xi+\omega(S\e^{tB}\xi,\e^{t\mu B}b-b)=\delta^T\xi$
for all $\xi\in\mathds{R}^{2n}$, $t\in\mathds{R}$. 
Differentiating and setting $t=0$ gives $\delta^TB\xi-\omega(SB\xi,b)=0$ for all $\xi$.\\
This completes the proof.
\end{proof}

So the automorphisms $\varphi$ satisfying $\varphi\big|_{H_n(\omega)}=id$ are of the form $(z,\xi,t)\mapsto(z+mt,\xi, t)$ with $m\in\mathds{R}$. This contradicts the assertion in \cite[p. 92]{MR85}.

\section{Lattices} \label{Sectionlattices}
Now, we study the lattices of oscillator groups and begin with an example.

\begin{example}\label{Bsp0}
Let $\Gamma_\omega$ denote the subgroup in $Osc_n(\omega,B)$ generated by $\big\{(1,0,0),(0,e_i,0)\mid i=1,\dots,2n\big\}$.
For each $z_0\in\mathds{R}$ and $\xi_0\in\mathds{R}^{2n}$ the subgroup $L:=\big\langle\Gamma_\omega\cup\{(z_0,\xi_0,1)\}\rangle$ is a lattice in $Osc_n(\omega,B)$ with $L\cap H_n(\omega)=\Gamma_\omega$ if and only if $\left(\omega(\xi_0,\e^Be_i),\e^Be_i,0\right)\in\Gamma_\omega$ for $i=1,\dots,2n$.

To see this, one can check that 
\begin{align}\label{LCapHeis}
(z,\xi,t)(v,\eta,0)(z,\xi,t)^{-1}&=(v+\omega(\xi,\e^{tB}\eta),\e^{tB}\eta,0)
\end{align}
for each $(v,\eta,0)\in H_n(\omega)\cap L$ and $(z,\xi,t)\in L$.
\end{example}

If L=$\langle\Gamma_\omega\cup\{(0,\xi,1)\}\rangle$ defines a lattice in $Osc_n(\omega,B)$ with $L\cap H_n(\omega)=\Gamma_\omega$, then we just call it $L(\xi)$.
Furthermore, we set $\Gamma_r:=\Gamma_{\omega_r}$.
Note that \[\Gamma_r=\left\{(z,\xi,0)\,\Big|\, \xi\in\mathds{Z}^{2n}, z\in\sum_{i=1}^n\frac{r_i}{2}\xi_{2i-1}\xi_i+\mathds{Z}\right\}.\]
In particular for $n=1$, we have $\Gamma_r=\mathds{Z}^3\times\{0\}$ for $r$ even and $\Gamma_r=\linebreak\left\{(z,\xi,0)\mid\xi\in\mathds{Z}^2, z\in\frac{1}{2}\xi_1\xi_2+\mathds{Z}\right\}$ for $r$ odd.
Moreover let $\Pi$ denote the projection on the last component and for a lattice $L$ of $Osc_n(\omega,B)$ we denote by $\Pi(L)$ the set $\Pi(L):=\big\{t\in\mathds{R}\mid \exists\, z,\xi: (z,\xi,t)\in L\big\}$.
Note that $\Pi(L)$ is a non-trivial discrete subgroup of $\mathds{R}$ for each lattice $L$ (compare to \cite[p. 90]{MR85}).

\begin{theorem} \label{theoremLCapHeis=GammaR}
Let $L$ be a lattice in $Osc_n(\omega,B)$.
Then there exists a uniquely determined $r\in\mathds{N}^{n}$ satisfying $r_i$ divides $r_{i+1}$ for $i=1,\dots,n-1$, a linear map $\tilde{B},$ an isomorphism \linebreak $\Phi:Osc_n(\omega,B)\rightarrow Osc_n(\omega_r,\tilde{B})$ and a $\xi_0\in\mathds{R}^{2n}$, such that \[\Phi(L)=L(\xi_0).\]
\end{theorem}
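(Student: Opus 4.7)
The plan is to bring the lattice into a standard form in two moves: first normalise the Heisenberg part $\Lambda:=L\cap H_n(\omega)$ via an isomorphism of oscillator groups, then use a residual automorphism of the target to kill the extra $z$-coordinate.

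First I would observe that $\Lambda$ is a lattice in $H_n(\omega)$: the projection $\Pi:L\to\mathds{R}$ has discrete non-trivial image $T\mathds{Z}$ for some $T>0$, so $\Lambda=\ker(\Pi|_L)$ is cocompact in $H_n(\omega)$. Next I would apply the classification of lattices in Heisenberg groups: $\Lambda\cap Z(H_n(\omega))=c\mathds{Z}$ for some $c>0$, the image $\Lambda_0$ of $\Lambda$ in $\mathds{R}^{2n}$ is a full lattice, and the alternating form $c^{-1}\omega$ restricted to $\Lambda_0$ has a uniquely determined elementary-divisor sequence $r=(r_1,\dots,r_n)$ with $r_i\mid r_{i+1}$. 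Choosing a symplectic basis of $\Lambda_0$ adapted to these invariants yields a linear isomorphism $\psi_0:\mathds{R}^{2n}\to\mathds{R}^{2n}$ with $\psi_0^{*}\omega_r=c^{-1}\omega$ sending $\Lambda_0$ to $\mathds{Z}^{2n}$; together with the rescaling of the centre by $c^{-1}$ this gives a Lie group isomorphism $\psi:H_n(\omega)\to H_n(\omega_r)$ with $\psi(\Lambda)=\Gamma_r$.

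Then I would extend $\psi$ to the ambient oscillator group and simultaneously rescale time by setting $\Phi(z,\xi,t):=(\psi(z,\xi),t/T)$. One computes that $\Phi$ is an isomorphism onto $Osc_n(\omega_r,\tilde B)$ with $\tilde B:=T\psi_0 B\psi_0^{-1}$, and the identities $\psi_0^{*}\omega_r=c^{-1}\omega$ and $\omega(B\cdot,\cdot)=-\omega(\cdot,B\cdot)$ transport directly to $\omega_r(\tilde B\cdot,\cdot)=-\omega_r(\cdot,\tilde B\cdot)$, with definiteness of $\omega_r(\tilde B\cdot,\cdot)$ inherited from that of $\omega(B\cdot,\cdot)$ (using $T>0$, $c>0$). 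By construction $\Phi(\Lambda)=\Gamma_r$ and $\Pi(\Phi(L))=\mathds{Z}$, so choosing any $(z_0,\xi_0,1)\in\Phi(L)$ gives $\Phi(L)=\langle\Gamma_r\cup\{(z_0,\xi_0,1)\}\rangle$. Finally, Theorem~\ref{theoremAutOsc} with $a=\mu=1$, $S=\mathrm{id}$, $b=0$, $m=-z_0$ supplies the automorphism $(z,\xi,t)\mapsto(z-z_0 t,\xi,t)$ of $Osc_n(\omega_r,\tilde B)$, which fixes $\Gamma_r$ and sends $(z_0,\xi_0,1)$ to $(0,\xi_0,1)$. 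Composing with $\Phi$ yields the desired isomorphism with $\Phi(L)=L(\xi_0)$. Uniqueness of $r$ is automatic, since any such $\Phi$ carries $L\cap H_n(\omega)$ to $\Gamma_r$, and $r$ is an isomorphism invariant of the abstract group $\Gamma_r$.

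The main obstacle I expect is the Heisenberg step — verifying that after adjusting $\omega$ by the scalar $c^{-1}$, a single change of basis can simultaneously turn $\Lambda_0$ into $\mathds{Z}^{2n}$ and put $c^{-1}\omega$ into the canonical block form $\omega_r$. This is the alternating analogue of Smith normal form and is where the divisibility $r_i\mid r_{i+1}$ and the uniqueness come from; the remaining extension to the oscillator group and the killing of $z_0$ are then straightforward consequences of Theorem~\ref{theoremAutOsc}.
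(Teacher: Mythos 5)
Your overall strategy is the same as the paper's: normalize $L\cap H_n(\omega)$ to $\Gamma_r$ using the classification of Heisenberg lattices, rescale time so that $\Pi$ of the image is $\mathds{Z}$, and kill the residual $z_0$ with the shear automorphism $(z,\xi,t)\mapsto(z-z_0t,\xi,t)$. The uniqueness of $r$ and the final two steps are fine. But there is a genuine gap in the Heisenberg step and it propagates into your extension $\Phi$. You claim that a purely linear map (a linear change of $\xi$ sending $\Lambda_0$ to $\mathds{Z}^{2n}$ together with a rescaling of the centre) already carries $\Lambda:=L\cap H_n(\omega)$ onto $\Gamma_r$. This is false in general: the elements of $\Lambda$ lying over a basis vector $v_j$ of $\Lambda_0$ form a coset $z_j+c\mathds{Z}$ in the central coordinate, and the offsets $z_j\bmod c$ need not vanish, nor can they always be removed by a different choice of adapted basis. (For instance, in $H_1(\omega_1)$ the lattice generated by $(1,0)$, $(\tfrac14,e_1)$, $(0,e_2)$ has $r=1$ but is mapped to $\Gamma_1$ by no map of the form $(z,\xi)\mapsto(az,S\xi)$, since $\pm\tfrac14\notin\tfrac12\mathds{Z}+\mathds{Z}$.) The correct normal form, which is what the classification theorem the paper cites actually provides, is $\varphi(z,\xi)=(\delta^T\xi+az,S\xi)$ with a generally nonzero $\delta$.

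Once $\delta\neq 0$ is present, your extension $\Phi(z,\xi,t):=(\psi(z,\xi),t/T)$ is no longer a homomorphism of oscillator groups: comparing $\Phi\big((z,\xi,t)(v,\eta,s)\big)$ with $\Phi(z,\xi,t)\Phi(v,\eta,s)$ leaves a discrepancy $\delta^T(\e^{tB}-E)\eta$ in the first coordinate, which vanishes for all $t,\eta$ only if $\delta=0$ (as $B$ is invertible). The paper repairs exactly this point: it chooses $b$ with $\delta^T\xi=\omega_r(S\xi,b)$ and uses the full isomorphism
\[
\overline{\varphi}(z,\xi,t)=\Bigl(az+\tfrac{1}{2}\omega_r(S\xi,\e^{tSBS^{-1}}b+b)+\tfrac{1}{2}\omega_r(\e^{tSBS^{-1}}b,b),\,S\xi+\e^{tSBS^{-1}}b-b,\,t\Bigr),
\]
i.e.\ it absorbs the affine correction $\delta^T\xi$ into the conjugation-type parameter $b$ of the automorphism formula of Theorem \ref{theoremAutOsc}. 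Your argument needs this additional ingredient; with it, the rest of your proof goes through as written.
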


\begin{proof}
At first, note that $L\cap H_n(\omega)$ is a lattice in $H_n(\omega)$, see \cite[p. 50]{Ra72}.
Furthermore, we know from theorem 1.10 in \cite[p. 303]{To78}
 that there is a uniquely determined $r=(r_1,\dots,r_n)$ where $r_i\in \mathds{N}\backslash\{0\}$, $r_i\mid r_{i+1}$ for $i=1,\dots,n-1$ and an isomorphism $\varphi:H_n(\omega)\rightarrow H_n(\omega_r)$, $\varphi(z,\xi)=(\delta^T\xi+az,S\xi)$, where $S^*\omega_r=a\omega$, such that $\varphi(L\cap Heis\big(\mathds{R}^{2n},\omega)\big)=\Gamma_r$.
We choose $b$, such that $\delta^T\xi=\omega_r(S\xi,b)$ for all $\xi\in\mathds{R}^{2n}$.
Then
\[\overline{\varphi}(z,\xi,t)=\left(az+\frac{1}{2}\omega_r(S\xi,\e^{tSBS^{-1}}b+b)+\frac{1}{2}\omega_r(\e^{tSBS^{-1}}b,b),S\xi+\e^{tSBS^{-1}}b-b,t\right)\] is an isomorphism from $Osc_n(\omega,B)$ to $Osc_n(\omega_r,SBS^{-1})$, mapping $L\cap H_n(\omega)$ onto $\Gamma_r$.
Let $t_0$ denote the smallest positive element in $\Pi(\overline{\varphi}(L))$.
So there is a $z_0$ and a $\xi_0$, such that $(z_0,\xi_0,t_0)\in \overline{\varphi}(L)$.
The map \[\phi:(z,\xi,t)\mapsto\left(z-\frac{z_0}{t_0}t,\xi,\frac{t}{t_0}\right)\]
is an isomorphism  from $Osc_n(\omega_r,SBS^{-1})$ to $Osc_n(\omega_r,t_0SBS^{-1})$ such that \linebreak $\phi\big|_{H_n(\omega_r)}=id$ and $(z_0,\xi_0,t_0)$ maps to $(0,\xi_0,1)$.
Hence the theorem is proved.
\end{proof}

From now on we consider $n=1$.

\begin{lemma}\label{lemmaIsoOscMitGitter}
There is an isomorphism $\varphi:Osc_1(\omega_r,B)\rightarrow Osc_1(\omega_r,\tilde{B})$ mapping $\Gamma_r$ onto $\Gamma_r$ and satisfying $\Pi(\varphi(0,0,1))=\pm 1$ if and only if $B$ or $-B$ is conjugate to $\tilde{B}$ with respect to an integer matrix with determinant $\pm 1$.
\end{lemma}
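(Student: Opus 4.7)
The plan is to apply a direct analogue of Theorem~\ref{theoremAutOsc} in order to parametrise all isomorphisms $\varphi\colon Osc_1(\omega_r,B)\to Osc_1(\omega_r,\tilde{B})$, and then to translate the two hypotheses of the lemma into constraints on the resulting parameters. The derivation given in the proof of Theorem~\ref{theoremAutOsc} carries over verbatim to isomorphisms between two (possibly distinct) oscillator groups sharing the symplectic form $\omega_r$, and yields that any such $\varphi$ has the shape
\[
\varphi(z,\xi,t)=\Bigl(az+\tfrac{1}{2}\omega_r(S\xi,\e^{t\mu\tilde{B}}b+b)+mt+\tfrac{1}{2}\omega_r(\e^{t\mu\tilde{B}}b,b),\;S\xi+\e^{t\mu\tilde{B}}b-b,\;\mu t\Bigr)
\]
with $a\in\mathds{R}\setminus\{0\}$, $\mu\in\mathds{R}\setminus\{0\}$, $m\in\mathds{R}$, $b\in\mathds{R}^{2}$ and $S\in GL(2,\mathds{R})$ satisfying $S^{*}\omega_r=a\omega_r$ and $SB=\mu\tilde{B}S$.

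For the ``only if'' direction one reads off from the third component of $\varphi(0,0,1)$ that the hypothesis $\Pi(\varphi(0,0,1))=\pm1$ is equivalent to $\mu=\pm1$. The restriction $\varphi|_{H_1(\omega_r)}(z,\xi,0)=(az+\omega_r(S\xi,b),S\xi,0)$ must map $\Gamma_r$ bijectively onto itself; projecting to the second component forces $S\mathds{Z}^{2}=\mathds{Z}^{2}$, whence $S\in GL(2,\mathds{Z})$. Using the elementary identity $\omega_r(S\xi,S\eta)=\det(S)\,\omega_r(\xi,\eta)$ (valid for $2\times 2$ matrices), the relation $S^{*}\omega_r=a\omega_r$ gives $a=\det S=\pm 1$. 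Rewriting $SB=\mu\tilde{B}S$ as $\tilde{B}=\mu SBS^{-1}$ then exhibits $\tilde{B}$ as conjugate to $B$ or to $-B$ via an integer matrix of determinant $\pm 1$.

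For the ``if'' direction, the plan is to start from $\tilde{B}=\mu SBS^{-1}$ with $\mu\in\{\pm 1\}$ and $S\in GL(2,\mathds{Z})$, $\det S=\pm 1$, set $a:=\det S$ and $m:=0$, and choose $b\in\mathds{R}^{2}$ so that the resulting formula defines an isomorphism sending $\Gamma_r$ onto $\Gamma_r$. It suffices to check the generators of $\Gamma_r$: one has $\varphi(1,0,0)=(a,0,0)\in\Gamma_r$ because $a=\pm 1$, while $\varphi(0,e_i,0)=(\omega_r(Se_i,b),Se_i,0)$ lies in $\Gamma_r$ if and only if
\[
\omega_r(Se_i,b)\equiv\tfrac{r}{2}(Se_i)_1(Se_i)_2\pmod{\mathds{Z}},\qquad i=1,2,
\]
a $2\times 2$ linear system in $b$ whose determinant equals $r^{2}\det S=\pm r^{2}\neq 0$ and which is therefore solvable over $\mathds{R}$. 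Since $(\pm 1,0,0),\,\varphi(0,e_1,0)$ and $\varphi(0,e_2,0)$ together generate $\Gamma_r$ (using that $Se_1,Se_2$ span $\mathds{Z}^{2}$), this gives $\varphi(\Gamma_r)=\Gamma_r$, and $\Pi(\varphi(0,0,1))=\mu=\pm 1$ holds by construction.

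The main obstacle will be the backward step, namely verifying that the Heisenberg cocycle discrepancy introduced by $S$ on $\mathds{Z}^{2}$ can always be absorbed by a suitable translation parameter $b$; as indicated above this reduces to inverting a small linear system whose determinant is a non-zero power of $r$, so the argument is routine once the form of $\varphi$ is in hand.
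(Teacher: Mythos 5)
Your proposal is correct, and it follows the same overall strategy as the paper (parametrise the isomorphisms, then translate $\Gamma_r$-preservation into integrality and determinant conditions on $S$), but the execution differs at both ends. For the ``only if'' part the paper does not re-derive a normal form: it first invokes Lemmas \ref{Iso00} and \ref{IsoO} to produce $T\in SL(2,\mathds{R})$ and $k\neq 0$ with $TBT^{-1}=k\tilde{B}$, factors $\varphi=\varphi_2\circ\varphi_1$ with $\varphi_1(z,\xi,t)=(z,T\xi,kt)$, and applies Theorem \ref{theoremAutOsc} only to the automorphism $\varphi_2$; you instead redo the computation of Theorem \ref{theoremAutOsc} directly for an isomorphism $Osc_1(\omega_r,B)\to Osc_1(\omega_r,\tilde{B})$. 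That does work, with the one caveat that it is not quite ``verbatim'': the step $\det(SB)=\mu^{2}\det(BS)$ which forces $\mu=\pm 1$ in the automorphism case is unavailable when $B\neq\tilde{B}$, so $\mu$ stays a free nonzero real -- which you correctly compensate for by extracting $\mu=\pm 1$ from the hypothesis $\Pi(\varphi(0,0,1))=\pm 1$. You also absorb the $-B$ case into $\mu=-1$ rather than pre-composing with $(z,\xi,t)\mapsto(z,\xi,-t)$ as the paper does. For the ``if'' part the paper makes an explicit parity-based choice $b\in\{(0,0),(0,\tfrac12),(\tfrac12,0)\}$ and exhibits explicit $\Gamma_r$-preimages of $(0,e_1,0)$, $(0,e_2,0)$, whereas you solve the congruences $\omega_r(Se_i,b)\equiv\tfrac{r}{2}(Se_i)_1(Se_i)_2 \pmod{\mathds{Z}}$ abstractly (the coefficient matrix $S^TN_{-r}$ indeed has determinant $r^2\det S\neq 0$) and obtain surjectivity onto $\Gamma_r$ from the fact that $(\pm1,0,0)$ together with two elements of $\Gamma_r$ whose $\xi$-components form a $\mathds{Z}$-basis of $\mathds{Z}^2$ already generate $\Gamma_r$; both arguments are sound, yours being shorter but less explicit (the paper's concrete $b$ is reused later in Lemma \ref{Autos}, which is what the explicitness buys).
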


\begin{proof}
At first, we will construct an isomorphism satisfying the conditions in the lemma. Let $B$ and $\tilde{B}$ be conjugate with respect to an integer matrix with determinant $\pm 1$.
This is sufficient to assume, since the map $\phi:(z,\xi,t)\mapsto(z,\xi,-t)$ is an isomorphism from $Osc_1(\omega_r,B)$ to $Osc_1(\omega_r,-B)$, satisfying the conditions in the lemma.
Let \[S=\left(\begin{matrix}s_1&s_2\\s_3&s_4\end{matrix}\right)\] be the integer conjugation matrix with determinant $\pm 1$, such that $\tilde{B}=SBS^{-1}$, $a=det(S)$ and
\[b:=\begin{cases}
        (0,0),&\text{ $s_1s_2$ and $s_3s_4$ are even}\\
        (0,\frac{1}{2}),&\text{ $s_1s_2$ is even and $s_3s_4$ is odd}\\
        (\frac{1}{2},0),&\text{ $s_3s_4$ is even and $s_1s_2$ is odd}\\
       \end{cases}.\]
We only get these three cases, since $\det(S)$ is odd.
Then \[\varphi:(z,\xi,t)\mapsto \big(az+\frac{1}{2}\omega_r(S\xi,\e^{t\tilde{B}}b+b)+\frac{1}{2}\omega_r(\e^{t\tilde{B}}b,b),S\xi+\e^{t\tilde{B}}b-b,t\big)\] is an isomorphism from $Osc_1(\omega_r,B)$ to $Osc_1(\omega_r,\tilde{B}),$ satisfying $\varphi(\Gamma_r)=\Gamma_r$ and $\Pi(\varphi(0,0,1))=\pm 1$.
Here we will verify this only for the case that $s_1s_2$ is even and $s_3s_4$ odd (the other cases run similar).
It's not hard to see that $\varphi$ is an isomorphism and $\Pi(\varphi(0,0,1))=1$.
Furthermore $\varphi(0,e_1,0)=\left(s_1\frac{r}{2},(s_1,s_3)^T,0\right)\in\Gamma_r$, since $s_1\frac{r}{2}\in\frac{r}{2}s_1s_3+\mathds{Z}$ and similarly
$\varphi(0,e_2,0)=\left(s_2\frac{r}{2},(s_2,s_4)^T,0\right)\in\Gamma_r$.
In addition
$(-\frac{r}{2},a(s_4,-s_3)^T,0)$ and $(0,a(-s_2,s_1)^T,0)$ are elements of $\Gamma_r$ and will be mapped to $(0,e_1,0)$ respectively $(0,e_2,0)$.
Since, moreover, $\varphi(1,0,0)=(a,0,0)=(\pm 1,0,0)$, we see finally that $\varphi(\Gamma_r)=\Gamma_r$.

So one direction of the lemma is verified.

Now, let $\varphi$ be an isomorphism satisfying the conditions in the lemma.
We know from Lemmas \ref{Iso00} and \ref{IsoO} that there is a $k\in\mathds{R}\backslash\{0\}$ and a $T\in SL(2,\mathds{R})$, such that $TBT^{-1}=k\tilde{B}$. 
So we can write $\varphi=\varphi_2\circ\varphi_1$, where \[\varphi_1(z,\xi,t)=(z,T\xi,kt)\] is an isomorphism from $Osc_1(\omega_r,B)$ to $Osc_1(\omega_r,\tilde{B})$ and $\varphi_2$ an automorphisms of $Osc_1(\omega_r,\tilde{B})$
Thus \[\varphi_2(z,\xi,t)=(a z+\frac{1}{2}\omega_r(\tilde{T}\xi,\e^{t\mu \tilde{B}}b+b)+mt+\frac{1}{2}\omega_r(\e^{t\mu \tilde{B}}b,b),\tilde{T}\xi+\e^{t\mu \tilde{B}}b-b,\mu t),\]
where $\det(\tilde{T})=a$, $\tilde{T}\tilde{B}\tilde{T}^{-1}=\mu\tilde{B}$ and $\mu\in\{+1,-1\}$.
Hence \[\varphi(z,\xi,t)=(a z+\frac{1}{2}\omega_r(\tilde{T}T\xi,\e^{kt\mu \tilde{B}}b+b)+mt+\frac{1}{2}\omega_r(\e^{kt\mu \tilde{B}}b,b),\tilde{T}T\xi+\e^{kt\mu \tilde{B}}b-b,\mu kt).\]

We get $\tilde{T}TBT^{-1}\tilde{T}^{-1}=\pm \tilde{B}$, since $TBT^{-1}=k\tilde{B}$, $\tilde{T}\tilde{B}\tilde{T}^{-1}=\mu \tilde{B}$ and $\Pi(\varphi(0,0,1))=\pm 1$.

In addition, $\varphi$ maps $(1,0,0)$ to $(\pm 1,0,0)$ and $(0,e_i,0)$ into $\Gamma_r$ for $i=1,2$. Hence $\det(\tilde{T}T)=\pm 1$ and $\tilde{T}Te_i\in\mathds{Z}^2$ for $i=1,2$.
Thus we obtain the assertion.
\end{proof}

To classify the lattices of oscillator groups, we have to choose representatives for the conjugacy classes of matrices, which appeared in the previous lemma. 
\begin{definition}
For $y\neq 0$ and $x\in\mathds{R}$ we denote  \[B_{x,y}:=\begin{pmatrix}\frac{x}{y}&-\frac{x^2}{y}-y\\\frac{1}{y}&-\frac{x}{y}\end{pmatrix}.\]
\end{definition}

The set of all $B_{x,y}$ is equal to the set of all matrices which are conjugate to $N_1$, and to the set of all $2\times 2$-matrices with determinant $1$ and trace $0$.
The subsets $\mathds{B}^+:=\big\{B_{x,y}\mid y>0\big\}$ and $\mathds{B}^-:=\big\{B_{x,y}\mid y<0\big\}$ are invariant under conjugation with elements of $SL(2,\mathds{R})$.
Furthermore, conjugation with elements of $GL(2,\mathds{R})$ with determinant $-1$ maps elements of $\mathds{B}^+$ to $\mathds{B}^-$ and reverse.

\begin{definition}
We define \[\mathds{F}_1:=\left\{(x,y)\in\mathds{R}^2\mid 0\leq x\leq\frac{1}{2}, y>0, x^2+y^2\geq 1\right\}\] and \[\mathds{F}=\mathds{F}_1\cup \left\{(x,y)\in\mathds{R}^{2}\mid -\frac{1}{2}<x<0, y>0, x^2+y^2>1\right\}.\]
\end{definition}

Note that the map $\iota: B_{x,y}\mapsto x+iy$ is a bijection from $\big\{B_{x,y}\mid y>0\big\}$ to the upper half plane of $\mathds{C}$, satisfying $\iota(AB_{x,y}A^{-1})=A(x+iy)$ for all $A=\left(\begin{smallmatrix}a&b\\c&d\end{smallmatrix}\right)\in SL(2,\mathds{R})$, where $Az=\frac{az+b}{cz+d}$.
With this in mind, we rewrite the theorem in \cite[p. 109]{KK98}:
\begin{remark} \label{Fundamentalbereich} 
For all $B_{x',y'}$, $y'>0$ there is a uniquely defined $(x,y)\in\mathds{F}$ and an $S\in SL(2,\mathds{Z})$, such that $SB_{x',y'}S^{-1}=B_{x,y}$.
\end{remark}

\begin{theorem} \label{Klassifik2}
Let $L$ be a lattice of $Osc_1(\omega,B)$.
Then there exist 
\begin{itemize}
    \item a uniquely determined $r\in\mathds{N}\backslash\{0\}$,
    \item a uniquely determined $\lambda=\lambda_0+k\pi$ with $k\in\mathds{N}$ and 
$\lambda_0\in\left\{\frac{1}{3}\pi,\frac{1}{2}\pi,\frac{2}{3}\pi,\pi\right\}$
    \item and a uniquely determined \[(x,y)\begin{cases}
=(\frac{1}{2},\frac{\sqrt{3}}{2}),&\lambda_0\in\{\frac{1}{3}\pi,\frac{2}{3}\pi\}\\
=(0,1),&\lambda_0=\frac{1}{2}\pi\\
\in\mathds{F}_1,&\lambda_0=\pi
\end{cases},\]
    \end{itemize}
and an isomorphism $\varphi:Osc_1(\omega,B)\rightarrow Osc_1(\omega_r,\lambda B_{x,y})$ satisfying $\varphi(L)\cap H_1(\omega_r)=\Gamma_r$ and $\Pi(\varphi(L))=\mathds{Z}$.\\

Conversely, for any such data $(r,\lambda,(x,y))$ there exists a lattice $L$ in $Osc_1(\omega_r,\lambda B_{x,y})$ satisfying $L\cap H_1(\omega_r)=\Gamma_r$ and $\Pi(L)=\mathds{Z}$.
\end{theorem}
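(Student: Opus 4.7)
The plan is to normalize $L$ by successively applying the reductions already established. First, Theorem \ref{theoremLCapHeis=GammaR} yields the unique $r$ and an isomorphism reducing $L$ to $L(\xi_0) \subset Osc_1(\omega_r, \tilde B)$, automatically producing the intersection and projection conditions. By Lemma \ref{ficken}, $\tilde B$ has trace zero and purely imaginary eigenvalues $\pm\I\lambda$ with $\lambda > 0$, so $\tilde B = \lambda B_{x', y'}$ for a uniquely determined $(x', y')$ with $y' \neq 0$. If $y' < 0$, I would compose with the isomorphism $(z, \xi, t) \mapsto (z, \xi, -t)$ from $Osc_1(\omega_r, \tilde B)$ to $Osc_1(\omega_r, -\tilde B)$, which preserves $\Gamma_r$ and, via $-B_{x', y'} = B_{x', -y'}$, arranges $y' > 0$. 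Finally Remark \ref{Fundamentalbereich} produces $S \in SL(2, \mathds{Z})$ with $S B_{x', y'} S^{-1} = B_{x, y}$, $(x, y) \in \mathds{F}$, and Lemma \ref{lemmaIsoOscMitGitter} upgrades this to an isomorphism preserving $\Gamma_r$ and with $\Pi(0, 0, 1) = \pm 1$.

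Next I would extract the arithmetic constraint from Example \ref{Bsp0}: since the normalized $L(\xi_0) \subset Osc_1(\omega_r, \lambda B_{x, y})$ is a lattice, $\e^{\lambda B_{x, y}} e_i \in \mathds{Z}^2$, so $\e^{\lambda B_{x, y}} \in GL(2, \mathds{Z})$. Since $B_{x, y}$ has trace $0$ and determinant $1$, Cayley-Hamilton gives $B_{x, y}^2 = -I$, whence $\e^{\lambda B_{x, y}} = \cos(\lambda) I + \sin(\lambda) B_{x, y}$. Integrality of the trace yields $2\cos(\lambda) \in \mathds{Z}$, so $\cos(\lambda) \in \{0, \pm 1/2, \pm 1\}$, which combined with $\lambda > 0$ gives exactly $\lambda = \lambda_0 + k\pi$ with $\lambda_0 \in \{\pi/3, \pi/2, 2\pi/3, \pi\}$ and $k \in \mathds{N}$.

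The case analysis on $\lambda_0$ pins down $(x, y)$: for $\lambda_0 = \pi$ we get $\e^{\lambda B_{x, y}} = \pm I$ for any $(x, y) \in \mathds{F}$, but the identity $\operatorname{diag}(1, -1) \cdot (-B_{x, y}) \cdot \operatorname{diag}(1, -1)^{-1} = B_{-x, y}$ gives via Lemma \ref{lemmaIsoOscMitGitter} (applied with the $-B$ option) an additional isomorphism realizing $(x, y) \leftrightarrow (-x, y)$, collapsing $\mathds{F}$ to $\mathds{F}_1$; for $\lambda_0 = \pi/2$ the entries of $\sin(\lambda) B_{x, y}$ must be integer, forcing $1/y, x/y, (x^2 + y^2)/y \in \mathds{Z}$, and together with $(x, y) \in \mathds{F}$ this leaves only $(0, 1)$; for $\lambda_0 \in \{\pi/3, 2\pi/3\}$ the diagonal half-integer $\pm 1/2$ combines with $\pm \sqrt{3}/2 \cdot B_{x, y}$ to force $\sqrt{3}/(2y) \in \mathds{Z}$, $\sqrt{3}(x^2 + y^2)/(2y) \in \mathds{Z}$ and $\sqrt{3}x/y \in 2\mathds{Z} + 1$, and the conditions $x^2 + y^2 \geq 1$ and $|x| \leq 1/2$ from $\mathds{F}$ then force $(x, y) = (1/2, \sqrt{3}/2)$. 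I expect this explicit case analysis to be the main technical obstacle.

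For uniqueness, two valid normalizations of the same $L$ compose to an isomorphism between the two target oscillator groups preserving $\Gamma_r$ and $\Pi(0, 0, 1) = \pm 1$. Lemma \ref{lemmaIsoOscMitGitter} relates the two matrices $\lambda_i B_{x_i, y_i}$ by a sign and a $GL(2, \mathds{Z})$-conjugation; eigenvalue matching forces $\lambda_1 = \lambda_2$, and Remark \ref{Fundamentalbereich} together with the $\mathds{F}_1$-reduction above gives $(x_1, y_1) = (x_2, y_2)$. For the converse, the admissibility of $(r, \lambda, (x, y))$ builds in $\e^{\lambda B_{x, y}} \in GL(2, \mathds{Z})$, so a suitable $\xi_0$ (e.g.\ $\xi_0 = 0$ when $r$ is even, and a small adjustment otherwise) produces a lattice $L(\xi_0)$ in $Osc_1(\omega_r, \lambda B_{x, y})$ with $L(\xi_0) \cap H_1(\omega_r) = \Gamma_r$ and $\Pi(L(\xi_0)) = \mathds{Z}$ by Example \ref{Bsp0}.
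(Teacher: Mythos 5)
Your proposal is correct and follows essentially the same route as the paper: reduction via Theorem \ref{theoremLCapHeis=GammaR}, the constraint $\e^{\lambda B_{x,y}}\in SL(2,\mathds{Z})$ coming from Example \ref{Bsp0}/equation (\ref{LCapHeis}) to pin down $\cos\lambda$ and then $(x,y)$, Remark \ref{Fundamentalbereich} and Lemma \ref{lemmaIsoOscMitGitter} (including the $\operatorname{diag}(1,-1)$ conjugation identity) for normalization and uniqueness, and Example \ref{Bsp0} again for the converse. The only differences are organizational (you solve the integrality conditions on the entries directly rather than via $y=|\sin\lambda|$, and collapse $\mathds{F}$ to $\mathds{F}_1$ only in the $\lambda_0=\pi$ case), which is harmless since the constraints exclude $x<0$ in the other cases.
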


\begin{proof}
 Because of Theorem \ref{theoremLCapHeis=GammaR} we can suppose that $\omega=\omega_r$ and the lattice $L$ given in $Osc_1(\omega_r,B)$ satisfies $L\cap H_1(\omega_r)=\Gamma_r$ and $\Pi(L)=\mathds{Z}$.
The procedure is to find a $\tilde{B}(=\lambda B_{x,y})$ for a given $B$, such that there is an isomorphism $\varphi$ from $Osc_1(\omega_r,B)$ to $Osc_1(\omega_r,\tilde{B})$, mapping $\Gamma_r$ onto $\Gamma_r$ and satisfying $\Pi(\varphi(L))=\mathds{Z}$.

Let $\lambda\in\mathds{R}$ be the positive imaginary part of the eigenvalue of $B$. Then we see that $B$ and $N_\lambda$ are conjugate. Hence $\e^B$ and $\e^{N_\lambda}=\left(\begin{smallmatrix}\cos\lambda&-\sin\lambda\\\sin\lambda&\cos\lambda\end{smallmatrix}\right)$ are conjugate. So $\tr(\e^B)=2\cos\lambda$ and $\cos\lambda\in\left\{-1,-\frac{1}{2},0,\frac{1}{2},1\right\}$, since $\e^B\in SL(2,\mathds{Z})$. Thus $\lambda=\lambda_0+k\pi\neq 0$, where 
$\lambda_0\in\left\{\frac{\pi}{3},\frac{\pi}{2},\frac{2\pi}{3},\pi\right\}$
and $k\in\mathds{Z}$.
There is an $x'\in\mathds{R}$ and a $y'\neq 0$, such that $B=\lambda B_{x',y'}$. Now we can use that $B_{x',-y'}=-B_{x',y'}$ to assume that $y'>0$.
Now we choose an $S\in SL(2,\mathds{Z})$, such that $SB_{x',y'}S^{-1}=B_{x,y}$, where $x$ and $y$ satisfy the conditions in Remark \ref{Fundamentalbereich}.
So we set $\tilde{B}:=|\lambda| B_{|x|,y}$.
Because of Lemma \ref{lemmaIsoOscMitGitter} and the fact that 
$\left(\begin{smallmatrix}1&0\\0&-1\end{smallmatrix}\right)B_{x,y}\left(\begin{smallmatrix}1&0\\0&-1\end{smallmatrix}\right)=-B_{-x,y}$,
there is an isomorphism $Osc_1(\omega_r,B)\rightarrow Osc_1(\omega_r,\tilde{B})$ mapping $\Gamma_r$ onto $\Gamma_r$ and satisfying $\Pi(\varphi(L))=\mathds{Z}$.

Now we want to see that $\lambda>0$ and $(x,y)\in\mathds{F}_1$ are uniquely determined.
Let $\lambda,\lambda_2>0$ and $(x,y),(x_2,y_2)\in\mathds{F}_1$.
Suppose that there is an isomorphism $\varphi:Osc_1(\omega_r,\lambda B_{x,y})\rightarrow Osc_1(\omega_r,\lambda_2B_{x_2,y_2})$ mapping $\Gamma_r$ onto $\Gamma_r$ and satisfying $\Pi(\varphi(0,0,1))=\pm 1$. Then we will see that $\lambda B_{x,y}=\lambda_2 B_{x_2,y_2}$.
At first Lemma \ref{lemmaIsoOscMitGitter} gives that $\pm\lambda B_{x,y}$ and $\lambda_2 B_{x_2,y_2}$ are conjugate. Since $\det(\pm\lambda B_{x,y})=\det(\lambda_2B_{x_2,y_2})$, we get $\lambda=\lambda_2$.
Hence $\pm B_{x,y}$ and $B_{x_2,y_2}$ are conjugate.

For a better readability we write $\mathds{Z}_1$-conjugate, if two matrices are conjugate with respect to a matrix in $SL(2,\mathds{Z})$ and $\mathds{Z}_{-1}$-conjugate, if two matrices are conjugate with respect to an integer matrix, having determinant $-1$.

It is clear that $-B_{x,y}=B_{x,-y}$ and $B_{x_2,y_2}$ are not $\mathds{Z}_1$-conjugate. In addition $B_{x,y}$ and $B_{x_2,y_2}$ are not $\mathds{Z}_{-1}$-conjugate.
If $B_{x,y}$ and $B_{x_2,y_2}$ are $\mathds{Z}_1$-conjugate, then $(x,y)=(x_2,y_2)$ because of Remark \ref{Fundamentalbereich}.
Finally, if $-B_{x,y}$ and $B_{x_2,y_2}$ are $\mathds{Z}_{-1}$-conjugate, then $B_{-x,y}$ and $B_{x_2,y_2}$ are $\mathds{Z}_1$-conjugate. Using Remark \ref{Fundamentalbereich} gives $x=x_2\in\{0,\frac{1}{2}\}$ and $y=y_2$ easily follows.

At last, we want to see how $\lambda$ and $(x,y)$ fit together.
Suppose $B=\lambda B_{x,y}$, where $(x,y)\in\mathds{F}_1$ and $\lambda=\lambda_0+k\pi$, satisfying $k\in\mathds{N}$ and 
$\lambda_0\in\left\{\frac{1}{3}\pi,\frac{1}{2}\pi,\frac{2}{3}\pi\right\}$.
Let $L$ be a lattice in $Osc_1(\omega_r,\lambda B_{x,y})$ such that $L\cap H_1(\omega_r)=\Gamma_r$ and $\Pi(L)=\mathds{Z}$.
Then, using equation (\ref{LCapHeis}) with $t=1$,
\begin{align}\label{eHochB}
&\e^{B}=\begin{pmatrix}\cos\lambda+\frac{x}{y}\sin\lambda&-\frac{x^2}{y}\sin\lambda-y\sin\lambda\\\frac{1}{y}\sin\lambda&-\frac{x}{y}\sin\lambda+\cos\lambda\end{pmatrix}\in SL(2,\mathds{Z}).
\end{align}
Thus $\frac{1}{y}\sin\lambda\in\mathds{Z}$ and $y=|\sin\lambda|$.
So $x=\frac{1}{2}$ if $\cos\lambda=\pm\frac{1}{2}$, and $x=0$ if $\cos\lambda=0$.
We get
\[B=\begin{cases}
        \lambda B_{0,1},&\cos\lambda=0\\
        \lambda B_{\frac{1}{2},\frac{\sqrt{3}}{2}},&\cos\lambda=\pm\frac{1}{2}\\
       \end{cases}.\]
Conversely, it is obvious that $\e^{\lambda B_{x,y}}\in SL(2,\mathds{Z})$ for the data $(r,\lambda,(x,y))$ described in the theorem.
Hence there is a lattice satisfying the claimed conditions (see Example \ref{Bsp0}, where $\xi_0=(0,0)$, $\xi_0=(0,\frac{1}{2})$ or $\xi_0=(\frac{1}{2},0)$ depending on $r$ and $\e^B$).
\end{proof}

\begin{lemma}\label{Autos} \label{AbbildungseigenschaftAut} 
Let $B=\lambda B_{x,y}$, where $x\in\mathds{R}$ and $y>0$. Then an automorphism $\varphi$ of $Osc_1(\omega_r,B)$  maps $\Gamma_r$ onto itself if and only if
\begin{align}\label{AutNummer}
\varphi(z,\xi,t)=\big(\mu z+\frac{1}{2}\omega_r(S\xi,\e^{t\mu B}b+b)+mt+\frac{1}{2}\omega_r(\e^{t\mu B}b,b),S\xi+\e^{t\mu B}b-b,\mu t\big),
\end{align}
where $m\in\mathds{R}$, $\mu\in\{\pm 1\}$, S is an integer matrix
\[S=\begin{pmatrix}s_1&s_2\\s_3&s_4\end{pmatrix}\in\left\{ \e^{tB},\e^{tB}\begin{pmatrix}1&-2x\\0&-1\end{pmatrix}\mid t\in\mathds{R}\right\}\]
with $\det(S)=\mu$ and $b\in(\frac{\mathds{Z}}{r},\frac{\mathds{Z}}{r}):=\frac{1}{r}\mathds{Z}\times\frac{1}{r}\mathds{Z}$, if $r$ is even, respectively
\[b=(b_1,b_2)\in\begin{cases}
        (\frac{\mathds{Z}}{r},\frac{\mathds{Z}}{r}),&s_1s_2\text{ and }s_3s_4\text{ are even}\\
        (\frac{\mathds{Z}}{r},\frac{1}{2r}+\frac{\mathds{Z}}{r}),&s_1s_2\text{ is even and }s_3s_4\text{ is odd}\\
        (\frac{1}{2r}+\frac{\mathds{Z}}{r},\frac{\mathds{Z}}{r}),&s_3s_4\text{is even and }s_1s_2\text{ is odd}\\
     \end{cases},\]
if $r$ is odd.
For short, we call such an automorphism $\Gamma_r$-preserving.
\end{lemma}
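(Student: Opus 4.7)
The plan is to apply Theorem \ref{theoremAutOsc} to parametrize any automorphism $\varphi$ of $Osc_1(\omega_r,B)$ in the form (\ref{AutOsc}) with parameters $a\in\mathds{R}\setminus\{0\}$, $\mu\in\{\pm 1\}$, $m\in\mathds{R}$, $b\in\mathds{R}^2$ and $S\in GL(2,\mathds{R})$ satisfying $\det(S)=a$ (since $S^*\omega_r=\det(S)\omega_r$ when $n=1$) and $SB=\mu BS$, and then to impose $\varphi(\Gamma_r)=\Gamma_r$ by testing on the three generators $(1,0,0)$, $(0,e_1,0)$, $(0,e_2,0)$. The identity $\varphi(1,0,0)=(a,0,0)$ forces $a\in\mathds{Z}$, and the same reasoning applied to $\varphi^{-1}$ gives $a\in\{\pm 1\}$, hence $\mu=\det(S)=a=\pm 1$. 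The parameter $m$ remains entirely free because it only enters the image of points with $t\neq 0$, while $\Gamma_r\subset H_1(\omega_r)$ lies in the slice $t=0$.

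Next I would solve $SB=\mu BS$ for $S$. For $\mu=+1$, since $B=\lambda B_{x,y}$ has non-real eigenvalues $\pm\I\lambda$, its commutant in $\mathds{R}^{2\times 2}$ is the two-dimensional algebra $\{\alpha I+\beta B\mid\alpha,\beta\in\mathds{R}\}$; using $B_{x,y}^2=-I$ and $\det(S)=1$, every such $S$ has the form $\cos(\lambda\tau)I+\lambda^{-1}\sin(\lambda\tau)B=\e^{\tau B}$ for a suitable $\tau\in\mathds{R}$. For $\mu=-1$, a direct computation shows that $S_0:=\begin{pmatrix}1&-2x\\0&-1\end{pmatrix}$ satisfies $S_0B=-BS_0$; since then $SS_0^{-1}$ commutes with $B$, one obtains $S\in\{\e^{\tau B}S_0\mid\tau\in\mathds{R}\}$, automatically of determinant $-1$. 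This reproduces the two families in the lemma's statement.

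It remains to extract the conditions on $b$ from $\varphi(0,e_i,0)\in\Gamma_r$. The image equals $(\omega_r(Se_i,b),Se_i,0)$, so first $Se_i\in\mathds{Z}^2$ for $i=1,2$ (which, together with $\det(S)=\pm 1$, puts $S$ in $GL(2,\mathds{Z})$), and second, writing $Se_1=(s_1,s_3)^T$ and $Se_2=(s_2,s_4)^T$, the two congruences
\begin{align*}
rs_1b_2-rs_3b_1&\equiv\tfrac{r}{2}s_1s_3\pmod{\mathds{Z}},\\
rs_2b_2-rs_4b_1&\equiv\tfrac{r}{2}s_2s_4\pmod{\mathds{Z}}
\end{align*}
must hold. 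For $r$ even the right-hand sides already lie in $\mathds{Z}$, and since the coefficient matrix has determinant $\pm 1$ the system forces $b\in\frac{1}{r}\mathds{Z}^2$. The main obstacle is the case of odd $r$: here one must invert the system modulo $\frac{1}{r}\mathds{Z}^2$ and, using that $s_1s_4-s_2s_3=\pm 1$ is odd (so the pair $(s_1s_2,s_3s_4)$ cannot both be odd), carry out a parity bookkeeping that distinguishes the three remaining parity patterns of $(s_1s_2,s_3s_4)$ to recover exactly the three cases stated in the lemma.

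For the converse direction one inserts the data $(m,\mu,S,b)$ from (\ref{AutNummer}) into (\ref{AutOsc}) and checks by direct computation that the three generators of $\Gamma_r$ are mapped into $\Gamma_r$; since $\det(S)=\pm 1$, the induced map on $\Gamma_r$ is surjective, so $\varphi(\Gamma_r)=\Gamma_r$.
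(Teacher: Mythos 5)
Your proposal is correct and follows essentially the same route as the paper: parametrize $\varphi$ via Theorem \ref{theoremAutOsc}, evaluate on the generators $(1,0,0)$, $(0,e_i,0)$ of $\Gamma_r$ to force $a=\det(S)=\mu=\pm 1$, $S\in GL(2,\mathds{Z})$ and the congruences $\omega_r(Se_i,b)\in\frac{r}{2}s_is_{i+2}+\mathds{Z}$, solve $SB=\mu BS$ to get the two families $\e^{tB}$ and $\e^{tB}\left(\begin{smallmatrix}1&-2x\\0&-1\end{smallmatrix}\right)$, and check the converse on generators. The only cosmetic differences are that you determine $S$ via the commutant $\{\alpha I+\beta B\}$ of $B$ (using $B_{x,y}^2=-I$) rather than the paper's conjugation $TN_1T^{-1}=B_{x,y}$ and $T^{-1}ST\in O(2,\mathds{R})$, and that your odd-$r$ ``parity bookkeeping'' is only sketched, but it is exactly the inversion of the integer system the paper carries out (for one representative parity case) and it does produce the three stated cases.
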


\begin{proof}
It's not hard to show that an automorphism as defined in the theorem maps $\Gamma_r$ onto $\Gamma_r$.
Let $\varphi$ be an automorphism in $Osc_1(\omega_r,B)$, given as in (\ref{AutOsc}) , mapping $\Gamma_r$ onto itself.
Then $a=\pm 1$. In addition $\det S=a$, since $\det(S)\omega_r=S^*\omega_r=a\omega_r$.
Since $B=\lambda B_{x,y}$ and $SB_{x,y}S^{-1}=\mu B_{x,y}=B_{x,\mu y}$, we get $\det(S)=\mu$.

Because $\varphi(0,e_i,0)=(\omega_r(Se_i,b),Se_i,0)$, the invertible matrix $S$ has integer entries.

For \[T=\begin{pmatrix}\sqrt{y}&\frac{x}{\sqrt{y}}\\0&\frac{1}{\sqrt{y}}\end{pmatrix},\] it holds that $TN_1 T^{-1}=B_{x,y}$.
Since $SB=\mu BS$ we see that $STN_1T^{-1}=\mu TN_1T^{-1}ST$.
Then $T^{-1}ST$ and $N_1$ (anti-)commute. Hence $T^{-1}ST\in O(2,\mathds{R})$.
Thus $T^{-1}ST=\e^{t\lambda N_1}\left(\begin{smallmatrix}1&0\\0&\mu\end{smallmatrix}\right)$ and $S=\e^{tB}T\left(\begin{smallmatrix}1&0\\0&\mu\end{smallmatrix}\right)T^{-1}$, for some $t\in\mathds{R}$.
If $\mu=1$, then $S=\e^{tB}$, if $\mu=-1$, then \[S=\e^{tB}\begin{pmatrix}1&-2x\\0&-1\end{pmatrix}.\]
Hence $S$ is an integer matrix in \[\left\{\e^{tB},\e^{tB}\begin{pmatrix}1&-2x\\0&-1\end{pmatrix}\mid t\in\mathds{R}\right\}.\]


Finally we want to see how $b$ looks like.
Let $r$ be even.
Then $\Gamma_r=\mathds{Z}^3$ and hence \[\omega_r\big((s_1,s_3),(b_1,b_2)\big),\omega_r\big((s_2,s_4),(b_1,b_2)\big)\in\mathds{Z}.\]
Thus $b\in(\frac{1}{r}\mathds{Z},\frac{1}{r}\mathds{Z})$.

Now let $r$ be odd, so $\Gamma_r=\left\{(z,\left(\xi_1,\xi_2),0\right)\mid z\in\frac{1}{2}\xi_1\xi_2+\mathds{Z}\right\}$.

We see that there are the same three cases to consider as in the previous lemma. Here, we only check the case that $s_1s_2$ is even and $s_3s_4$ is odd, especially $s_1$ is even (the case that $s_2$ is even runs similarly).
Since $\det(S)$ is odd, we get that $s_2$ is odd.
Then $\omega_r\big((s_1,s_3),(b_1,b_2)\big)\in\mathds{Z}$ and $\omega_r\big((s_2,s_4),(b_1,b_2))\in\mathds{Z}+\frac{1}{2}.$
Hence \[r\begin{pmatrix}b_1\\b_2\end{pmatrix}=\det(S)\begin{pmatrix}-s_2&s_1\\-s_4&s_3\end{pmatrix}\begin{pmatrix}k_1\\k_2+\frac{1}{2}\end{pmatrix},\]
for some $k_1,k_2\in\mathds{Z}$.
Thus $rb_1\in\mathds{Z}$ and $rb_2\in\mathds{Z}+\frac{1}{2}$.

Finally, for each case we obtain the assertion.
\end{proof}

Now we can completely classify the lattices of $Osc_1(\omega,B)$ by using Theorem \ref{Klassifik2} and the following one.

\begin{theorem}\label{Klassifik}
Suppose $B=\lambda B_{x,y}$, $(x,y)\in\mathds{F}_1$ and $\lambda=\lambda_0+k\pi$, where $k\in\mathds{N}\cup\{0\}$ and 
$\lambda_0\in\left\{\frac{1}{3}\pi,\frac{1}{2}\pi,\frac{2}{3}\pi,\pi\right\}$.
Let $L=\big\langle\Gamma_r \cup \{(z,\xi,1)\}\big\rangle$ be a lattice in $Osc_1(\omega_r,B)$ with $L\cap H_1(\omega_r)=\Gamma_r$.
Then there is a uniquely defined $\xi_0$ to extract from the list in Appendix \ref{AppA} and an automorphism $\varphi$ of $Osc_1(\omega_r,B)$, such that $\varphi(L)=L(\xi_0)$.
\end{theorem}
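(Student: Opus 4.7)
The plan is to use Lemma \ref{Autos} to reduce an arbitrary generator of $L$ to a standard form $(0,\xi_0,1)$ and then enumerate the orbits of $\xi_0\in\mathds{R}^2/\mathds{Z}^2$ under the group of $\Gamma_r$-preserving automorphisms of $Osc_1(\omega_r,B)$.

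First, using the special automorphism $\varphi_m(z',\xi',t)=(z'+mt,\xi',t)$ (Lemma \ref{Autos} with $S=I$, $\mu=1$, $b=0$) and taking $m=-z$, one may assume the extra generator is $(0,\xi,1)$, i.e.\ $L=L(\xi)$. Multiplying this generator on either side by an element of $\Gamma_r$ changes $\xi$ only by a vector in $\mathds{Z}^2$, so $\xi$ is intrinsically an element of $\mathds{R}^2/\mathds{Z}^2$, and only those $\xi$ for which Example \ref{Bsp0} produces a lattice are admissible, namely those with $\omega_r(\xi,\e^Be_i)$ in the first-component set of $\Gamma_r$ for $i=1,2$.

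Next, a general $\Gamma_r$-preserving automorphism from Lemma \ref{Autos} sends $(0,\xi,1)$ to an element with middle component $S\xi+(\e^{\mu B}-I)b$ and last component $\mu$. Re-applying the first step to normalise $z$, and (when $\mu=-1$) inverting the resulting generator, replaces $\xi$ modulo $\mathds{Z}^2$ by
\[
\xi'\equiv\begin{cases}S\xi+(\e^B-I)b & \text{if }\mu=+1,\\ -\e^B S\xi+(\e^B-I)b & \text{if }\mu=-1,\end{cases}
\]
where $(S,b)$ ranges over the admissible pairs listed in Lemma \ref{Autos}. Two data $\xi,\xi'$ give isomorphic $L(\xi),L(\xi')$ precisely when they lie in one orbit of this action, and the classification reduces to picking fundamental-domain representatives.

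For this I would split into the four sub-cases $\lambda_0\in\{\pi/3,\pi/2,2\pi/3,\pi\}$. In each, $\e^B=\cos(\lambda)I+\sin(\lambda)B_{x,y}$ is explicit, and the integer matrices in $\{\e^{tB}\mid t\in\mathds{R}\}$ form a finite cyclic group (of order $6$, $4$, $3$, respectively $\leq 2$) extended by an order-two piece from $\mu=-1$; for $\lambda_0\in\{\pi/3,\pi/2,2\pi/3\}$ the pair $(x,y)$ is fixed so the enumeration is direct. The main obstacle is the case $\lambda_0=\pi$: there $\e^B=\pm I$ makes the $b$-shift part of the action collapse to either the zero map (if $\e^B=I$) or multiplication by $-2b\in(2/r)\mathds{Z}^2$, so almost all reduction must come from the finite stabiliser of $B_{x,y}$ in $GL(2,\mathds{Z})$, which depends sensitively on the position of $(x,y)$ in $\mathds{F}_1$: interior points carry only the minimal stabiliser while boundary and corner points carry larger ones. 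Carrying out the orbit enumeration in each sub-case, checking the lattice constraint on $\omega_r(\xi,\e^Be_i)$, and verifying pairwise inequivalence of the chosen representatives, produces the finite list of $\xi_0$ tabulated in Appendix \ref{AppA}.
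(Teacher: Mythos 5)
Your reduction coincides with the paper's strategy: normalise $z$ with the automorphism $(z,\xi,t)\mapsto(z+mt,\xi,t)$, observe that $\xi$ only matters modulo $\mathds{Z}^2$ and is constrained by the lattice condition of Example \ref{Bsp0}, and then classify $\xi$ up to the action $\xi\mapsto S\xi+(\e^{B}-E_2)b$ (resp. $\xi\mapsto-\e^{B}S\xi+(\e^{B}-E_2)b$ after inverting the generator when $\mu=-1$) of the $\Gamma_r$-preserving automorphisms of Lemma \ref{Autos}; this action formula and your observation that for $\lambda_0=\pi$ the shift part degenerates to $0$ or $-2b$ are exactly right. (Two small points: you should say explicitly that an automorphism with $\varphi(L(\xi))=L(\tilde\xi)$ automatically preserves $\Gamma_r$, because it preserves the commutator subgroup $H_1(\omega_r)$ and hence $L(\xi)\cap H_1(\omega_r)$; and for $\lambda_0=\frac{2}{3}\pi$ the integer matrices in $\{\e^{tB}\mid t\in\mathds{R}\}$ form a cyclic group of order $6$, not $3$, since $(x,y)=(\frac{1}{2},\frac{\sqrt{3}}{2})$.)

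The genuine gap is that the proof stops exactly where the substance of the theorem begins: the statement is the explicit table of Appendix \ref{AppA}, and the proposal derives none of its entries. For existence you must, in each case of $(\lambda_0,\,r\bmod 2,\,r\bmod 3)$, actually exhibit admissible pairs $(S,b)$ moving an arbitrary admissible $\xi$ to a listed representative; this uses $\det(\e^{B}-E_2)\in\{1,2,3\}$ and, for odd $r$, the facts that the admissible $\xi$ lie in shifted lattices such as $(\frac{1}{2r}+\frac{1}{r}\mathds{Z})\times\frac{1}{r}\mathds{Z}$ and that the allowed $b$ depends on the parities of $s_1s_2$ and $s_3s_4$ --- which is precisely what produces the separate odd-$r$ column of the table. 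For uniqueness, ``verifying pairwise inequivalence of the chosen representatives'' is an assertion, not an argument: one must show, e.g., that $(0,0)$, $(\frac{1}{r},0)$, $(0,\frac{1}{r})$, $(\frac{1}{r},\frac{1}{r})$ are pairwise inequivalent for $\lambda_0=\pi$ (with the finite stabiliser depending on whether $x=0$, $x=\frac{1}{2}$, $x^2+y^2=1$), and that $(0,0)\not\sim(0,\frac{1}{r})$ for $\lambda_0=\frac{\pi}{2}$, $r$ even, etc. The paper spends most of its proof on exactly these verifications, handling $\lambda_0=\pi$ by direct congruence bookkeeping and $\lambda_0\in\{\frac{\pi}{3},\frac{\pi}{2},\frac{2\pi}{3}\}$ by separate lemmas (O-lattice presentations showing the lattices are not even abstractly isomorphic); in your framework you would still have to produce concrete invariants (for instance the coordinate sum of $r\xi$ modulo $2$, resp.\ modulo $3$, which is preserved up to sign by the admissible $S$ and annihilated by the shift sublattice $(\e^{B}-E_2)\frac{1}{r}\mathds{Z}^2$). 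Without these computations neither the list nor its uniqueness is established.
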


\begin{proof}
We will say that $\xi$ and $\eta$ are equivalent if there is an automorphism mapping $L(\xi)$ onto $L(\eta)$.

\paragraph{Existence}
We begin with showing the existence of a $\xi_0$ in the list in Appendix \ref{AppA} and an $\Gamma_r$-preserving automorphism $\varphi$ for each lattice $L$ satisfying that $L\cap H_1(\omega_r)=\Gamma_r$, such that $\varphi(L)=L(\xi_0)$.

The proof will be divided into parts, dependent on the value of $\lambda$.
We, always, use $\e^B$, which we can compute with equation (\ref{eHochB}) in the proof of Theorem \ref{Klassifik2}. 

First we notice, depending on $r$, which values are possible for $\xi\in\mathds{R}^{2}$, such that $\langle\Gamma_r\cup\{(z,\xi,1)\}\rangle$ defines a lattice.
Therefor we use example \ref{Bsp0}. 
Afterward, we give automorphisms given as in (\ref{AutNummer}) in Lemma \ref{Autos}, which map $\big\langle\Gamma_r\cup\{(z,\xi,1)\}\big\rangle$ onto $L(\xi_0)$ for some $\xi_0$ from the list. 

It is clear that the automorphism $(z,\xi,t)\mapsto(z+mt,\xi,t)$ for some $m$ maps $(z_0,\xi_0,1)$ to $(0,\xi_0,1)$. Furthermore, we know from $S$ how the last component will be mapped by the automorphism (\ref{AutNummer}), since $\mu=\det(S)$.
Therefore we just give $S$ and $b$ and check, how $(0,\xi,1)$ will be mapped.
Each automorphism we give is $\Gamma_r$-preserving, so we don't make mention of that always. For further argumentation let $\xi=(\xi_1,\xi_2)$ denote an arbitrary vector in $\mathds{R}^2$ which satisfies the condition in Example \ref{Bsp0}.
Now, we begin with showing the existence of a vector of the list in Appendix \ref{AppA}.\\
\\
\noindent\hspace*{20mm}%
Suppose $\lambda=\lambda_0+2k\pi$, $k\in\mathds{N}$ and $\lambda_0\in\{\frac{\pi}{3},\frac{5\pi}{3}\}$. Let $r$ be even.
Then $\xi\in(\frac{1}{r}\mathds{Z},\frac{1}{r}\mathds{Z})$. In addition $\det(\e^B-E_2)=1$. So  the automorphism (\ref{AutNummer}), where $b:=(\e^B-E_2)^{-1}\xi\in(\frac{1}{r}\mathds{Z},\frac{1}{r}\mathds{Z})$ and $S:=E_2$, maps $(0,0,1)$ to $(0,\xi,1)$. Thus $(0,0)$ and $\xi$ are equivalent.
Now let $r$ be odd.
For $\lambda_0=\frac{\pi}{3}$ one can check that $\xi\in(\frac{1}{r}\mathds{Z}+\frac{1}{2r},\frac{1}{r}\mathds{Z})$. Since, again, $\det(\e^B-E_2)=1$, the automorphism (\ref{AutNummer}) given by $b:=(\e^B-E_2)^{-1}(\xi_1-\frac{1}{2r},\xi_2)\in(\frac{1}{r}\mathds{Z},\frac{1}{r}\mathds{Z})$ and $S:=E_2$ shows that $(\frac{1}{2r},0)$ and $\xi$ are equivalent.
For $\lambda_0=\frac{5}{3}\pi$ an analogous argumentation holds, except having $\frac{1}{2r}$ in the other component.\\
\\
\noindent\hspace*{20mm}%
Suppose $\lambda=\lambda_0+2k\pi$, $k\in\mathds{N}$ and $\lambda_0\in\{\frac{\pi}{2},\frac{3\pi}{2}\}$. Then $\xi\in(\frac{1}{r}\mathds{Z},\frac{1}{r}\mathds{Z})$. In addition $\det(\e^B-E_2)=2$.
If $r\xi_1-r\xi_2$ is even, then the automorphism defined by  $b:=(\e^B-E_2)^{-1}\xi\in(\frac{1}{r}\mathds{Z},\frac{1}{r}\mathds{Z})$ and $S:=E_2$ maps $(0,0,1)$ to $(0,\xi,1)$. Hence $(0,0)$ and $\xi$ are equivalent.
If $r\xi_1-r\xi_2$ is odd, then we set $S:=E_2$ and $b:=(\e^B-E_2)^{-1}(\xi_1,\xi_2-\frac{1}{r})\in(\frac{1}{r}\mathds{Z},\frac{1}{r}\mathds{Z})$. Thus $(0,\frac{1}{r})$ is equivalent to $\xi$.
If additionally $r$ is odd, we instead set $b:=(e^B-E_2)^{-1}(1+\xi_1,\xi_2)\in(\frac{1}{r}\mathds{Z},\frac{1}{r}\mathds{Z})$. The related automorphism shows that $(0,0)$ is equivalent to $(1,0)+\xi$, which is equivalent to $\xi$.\\
\\
\noindent\hspace*{20mm}%
Suppose $\lambda=\lambda_0+2k\pi$, $k\in\mathds{N}$ and $\lambda_0\in\{\frac{2\pi}{3},\frac{4\pi}{3}\}$.
Let $r$ be even. Then $\xi\in(\frac{1}{r}\mathds{Z},\frac{1}{r}\mathds{Z})$.
If $r\xi_1+r\xi_2\equiv 0(3)$, the automorphism given by $b:=(\e^B-E_2)^{-1}\xi\in(\frac{1}{r}\mathds{Z},\frac{1}{r}\mathds{Z})$ and $S:=E_2$ maps $(0,0,1)$ to $(0,\xi,1)$. Hence $(0,0)$ and $\xi$ are equivalent.
If $r\xi_1+r\xi_2\equiv 1(3)$, then the automorphism defined by $b:=(\e^B-E_2)^{-1}(\xi_1-\frac{1}{r},\xi_2)\in(\frac{1}{r}\mathds{Z},\frac{1}{r}\mathds{Z})$ and $S:=E_2$ shows that $(\frac{1}{r},0)$ and $\xi$ are equivalent.
If $r\xi_1+r\xi_2\equiv 2(3)$, we set $b:=(\e^B-E_2)^{-1}(\xi_1-\frac{1}{r},\xi_2-\frac{1}{r})\in(\frac{1}{r}\mathds{Z},\frac{1}{r}\mathds{Z})$ and $S:=\left(\begin{smallmatrix}1&-1\\1&0\end{smallmatrix}\right)$. Thus $(\frac{1}{r},0)$ and $\xi$ are equivalent.
If additionally $3$ is not a factor of $r$, we set $b:=(\e^B-E_2)^{-1}(x+\xi_1,\xi_2)\in(\frac{1}{r}\mathds{Z},\frac{1}{r}\mathds{Z})$, where $rx+r\xi_1+r\xi_2\equiv 0(3)$, and $S:=E_2$. So we get that $(0,0)$ is equivalent to $(x,0)+\xi$, which is equivalent to $\xi$.

Now let $r$ be odd.
We obtain $\lambda_0=\frac{2}{3}\pi$. Using equation (\ref{LCapHeis}), we get $\xi\in(\frac{1}{r}\mathds{Z},\frac{1}{r}\mathds{Z}+\frac{1}{2r})$.
If $r\xi_1+r\xi_2-1/2\equiv 0(3)$, then the automorphism given by $b:=(\e^B-E_2)^{-1}(\xi_1,\xi_2-\frac{1}{2r})\in(\frac{1}{r}\mathds{Z},\frac{1}{r}\mathds{Z})$ and $S:=E_2$ shows that $(0,\frac{1}{2r})$ and $\xi$ are equivalent.
If $r\xi_1+r\xi_2-1/2\equiv 1(3)$, then we set $b:=(\e^B-E_2)^{-1}(\xi_1-\frac{1}{r},\xi_2-\frac{1}{2r})\in(\frac{1}{r}\mathds{Z},\frac{1}{r}\mathds{Z})$ and $S:=E_2$. The related automorphism shows that $(\frac{1}{r},\frac{1}{2r})$  and $\xi$ are equivalent.
If $r\xi_1+r\xi_2-1/2\equiv 2(3)$, then we set $b:=(\e^B-E_2)^{-1}(\xi_1,\xi_2+\frac{1}{2r})\in(\frac{1}{r}\mathds{Z},\frac{1}{r}\mathds{Z})$,  $S:=-E_2$ and get that $(0,\frac{1}{2r})$ and $\xi$ are equivalent.

If additionally $3$ is not a factor of $r$, then we set $b:=(\e^B-E_2)^{-1}(x+\xi_1,\frac{1}{2r}+\xi_2)\in(\frac{1}{r}\mathds{Z},\frac{1}{r}\mathds{Z})$, where $rx+r\xi_1+r\xi_2-1/2\equiv 0(3)$, and $S:=E_2$. The related automorphism shows that $(0,\frac{1}{2r})$ is equivalent to $(x,0)+\xi$, which is equivalent to $\xi$.

For $\lambda_0=\frac{4}{3}\pi$ there is an analogue argumentation, where $\frac{1}{2r}$ is in the other component.\\
\\
\noindent\hspace*{20mm}%
Suppose $\lambda=\pi+2\pi k$, $k\in\mathds{N}$. Then $\xi_0\in(\frac{1}{r}\mathds{Z},\frac{1}{r}\mathds{Z})$.
Let $r$ be odd. So there are $x',y'\in\{0,1\}$, such that $b:=(\e^B-E_2)^{-1}(\xi_1+x',\xi_2+y')\in(\frac{1}{r}\mathds{Z},\frac{1}{r}\mathds{Z})$ and the automorphism defined by $S=E_2$ and the above given $b$ shows that $(0,0)$ is equivalent to $\xi+(x',y')$, which is equivalent to $\xi$.
Now let $r$ be even.
We set $\eta:=\frac{1}{2r}(-1+(-1)^{r\xi_1},-1+(-1)^{r\xi_2})$. Then $\eta\in\{(0,0),(\frac{1}{r},0),(0,\frac{1}{r}),(\frac{1}{r},\frac{1}{r})\}$ and the automorphism given by $S=E_2$ and $b:=(\e^B-E_2)^{-1}(\xi_0+\eta)\in(\frac{1}{r}\mathds{Z},\frac{1}{r}\mathds{Z})$ maps $(0,\eta,1)$ to $(0,\xi,1)$.

If $x^2+y^2=1$, then the automorphism given by $S=\left(\begin{smallmatrix}0&1\\1&0\end{smallmatrix}\right)$ and $b=(0,0)$ 
maps $\big(0,(\frac{1}{r},0),1\big)$ to $\big(0,(0,\frac{1}{r}),-1\big)=\big(0,(0,\frac{1}{r}),1\big)^{-1}$. Thus $(\frac{1}{r},0)$ and $(0,\frac{1}{r})$ are equivalent.\\
If $x=\frac{1}{2}$, then the automorphism given by $S=\left(\begin{smallmatrix}1&-1\\0&-1\end{smallmatrix}\right)$ and 
$b=(-\frac{1}{r},-\frac{1}{r})$ maps $\left(0,(0,\frac{1}{r}),1\right)$ to $\big(0,(\frac{1}{r},\frac{1}{r}),-1\big)=\big(0,(\frac{1}{r},\frac{1}{r}),1\big)^{-1}$. Hence $(0,\frac{1}{r})$ and $(\frac{1}{r},\frac{1}{r})$ are equivalent.
If $(x,y)=(\frac{1}{2},\frac{\sqrt{3}}{2})$, then both automorphisms can be used. Thus every $\xi_0\in(\frac{1}{r}\mathds{Z},\frac{1}{r}\mathds{Z})$ is equivalent to $(0,0)$ or $(\frac{1}{r},\frac{1}{r})$.\\
\\
\noindent\hspace*{20mm}%
We come to the last case: Suppose $\lambda=2\pi k$, $k\in\mathds{N}$.\\
Using equation (\ref{LCapHeis}) we get $(\xi_1,\xi_2)\in(\frac{1}{r}\mathds{Z},\frac{1}{r}\mathds{Z})$.

In this case we can neglect $b$, since $S\xi+\e^{B}b-b=S\xi+b-b=S\xi$.
Instead, it's more important to consider all the finitely many integer matrices in \[\left\{\e^{tB_{x,y}},\e^{tB_{x,y}}\begin{pmatrix}1&-2x\\0&-1\end{pmatrix}\mid t\in\mathds{R}\right\},\] 
for $B_{x,y}$.

At first we give some automorphisms which map $\Gamma_r$ onto itself for all $x$ and $y$. Afterwards, we restrict our observation to the different cases.

For each $\big(0,(\xi_1,\xi_2),1\big)$ there are $0\leq k, l<r$ such that $\Gamma_r\cup\big\{\big(0,(\xi_1,\xi_2),1\big)\big\}$ and $\Gamma_r\cup\left\{\big(0,(\frac{k}{r},\frac{l}{r}),1\big)\right\}$ generate the same lattice.

The automorphism where $S=-E_2$ maps $(0,(\frac{k}{r},\frac{l}{r}),1)$ to $(0,(-1,-1),0)(0,(\frac{r-k}{r},\frac{r-l}{r}),1)$.

Thus there is for each $\xi\in(\frac{1}{r}\mathds{Z},\frac{1}{r}\mathds{Z})$ an equivalent \[\xi_0\in M_1:=\left\{\big(\frac{k}{r},\frac{l}{r}\big)\mid 0\leq k,l\leq \frac{r}{2}\right\}\cup \left\{\big(\frac{k}{r},\frac{l}{r}\big)\mid 0<k<\frac{r}{2}<l<r\right\}.\]
Thus the first row is verified.

If additionally $x=0$, then the automorphism where $S=\left(\begin{smallmatrix}1&0\\0&-1\end{smallmatrix}\right)$ maps $\big(0,(\frac{k}{r},\frac{l}{r}),1\big)$ to a $\big((0,(0,-1),0)(0,(\frac{k}{r},\frac{r-l}{r}),1)\big)^{-1}$.
Thus the second row is verified.

If $x=\frac{1}{2}$ instead, then the automorphism where $S=\left(\begin{smallmatrix}-1&1\\0&1\end{smallmatrix}\right)$ maps $\big(0,(\frac{k}{r},\frac{l}{r}),1\big)$, where $k>l$, to $\big(0,(\frac{k-l}{r},\frac{r-l}{r}),1\big)^{-1}$.
So we can narrow the set of all $\xi_0$ down to \[\left\{\big(\frac{k}{r},\frac{l}{r}\big)\mid 0\leq k\leq l\leq \frac{r}{2}\right\}\cup\left\{\big(\frac{k}{r},\frac{l}{r}\big)\mid 0<k<\frac{r}{2}<l<r\right\}\cup\left\{\big(\frac{k}{r},0\big)\mid 0\leq k\leq\frac{r}{2}\right\}.\]

Furthermore $S=-\left(\begin{smallmatrix}-1&1\\0&1\end{smallmatrix}\right)$ gives an $\Gamma_r$-preserving automorphism, which maps $\big(0,(\frac{k}{r},\frac{l}{r}),1\big)$, where $l\geq k$, to $\big(0,(\frac{l-k}{r},\frac{l}{r}),1\big)^{-1}$.
Hence the third row follows.

For further argumentation suppose $x^2+y^2=1$.
We know that there is for each $\xi\in(\frac{1}{r}\mathds{Z},\frac{1}{r}\mathds{Z})$ an equivalent $\xi_0\in M_1$.
We can still restrict this set.
The automorphism where $S=\left(\begin{smallmatrix}0&-1\\-1&0\end{smallmatrix}\right)$ maps $\big(0,(\frac{k}{r},\frac{l}{r}),1\big)$ to $\big(0,(-\frac{l}{r},-\frac{k}{r}),-1\big)=\big(0,(\frac{l}{r},\frac{k}{r}),1\big)^{-1}$.
So there is for each $\xi\in(\frac{1}{r}\mathds{Z},\frac{1}{r}\mathds{Z})$ an equivalent \[\xi_0\in\left\{\big(\frac{k}{r},\frac{l}{r}\big)\mid 0\leq k\leq l\leq \frac{r}{2}\right\}\cup \left\{\big(\frac{k}{r},\frac{l}{r}\big)\mid 0<k<\frac{r}{2}<l<r\right\}.\]
Additionally the automorphism where $S=\left(\begin{smallmatrix}0&1\\1&0\end{smallmatrix}\right)$ maps $\big(0,(\frac{k}{r},\frac{l}{r}),1\big)$ to
\[\left(\big(0,(-1,0),0\big)\big(0,(0,-1),0\big)\big(0,(\frac{r-l}{r},\frac{r-k}{r}),1\big)\right)^{-1}.\]

Hence for $x^2+y^2=1$ and $\xi\in(\frac{1}{r}\mathds{Z},\frac{1}{r}\mathds{Z})$ there is an equivalent 
\[\xi_0\in M_2:=\left\{\big(\frac{k}{r},\frac{l}{r}\big)\mid 0\leq k\leq l\leq \frac{r}{2}\right\}\cup \left\{\big(\frac{k}{r},\frac{l}{r}\big)\mid k+l\leq r, 0<k<\frac{r}{2}<l<r\right\}.\]

Now we want to see, which elements in $M_2$ are equivalent, if $(x,y)=(0,1)$ or  $(x,y)=(\frac{1}{2},\frac{\sqrt{3}}{2})$.

So suppose $(x,y)=(0,1)$.
The automorphism where $S=\left(\begin{smallmatrix}-1&0\\0&1\end{smallmatrix}\right)$ maps $\big(0,(\frac{k}{r},\frac{l}{r}),1\big)$ to 
$\big(\big(0,(\frac{k}{r},\frac{r-l}{r}),1\big)\big(0,(0,-1),0\big)\big)^{-1}.$
Hence the fourth row follows.

At last suppose $(x,y)=(\frac{1}{2},\frac{\sqrt{3}}{2})$.
The automorphism where  $S:=\left(\begin{smallmatrix}1&-1\\0&-1\end{smallmatrix}\right)$ maps $(0,(\frac{k}{r},\frac{l}{r}),1)$ to $\left(0,(\frac{k-l}{r},-\frac{l}{r}),-1\right)=\left(0,(\frac{l-k}{r},\frac{l}{r}),1\right)^{-1}$.
Hence we get for each $\xi\in(\frac{1}{r}\mathds{Z},\frac{1}{r}\mathds{Z})$ an equivalent 
\begin{align*}
\xi_0\in&\left\{\big(\frac{k}{r},\frac{l}{r}\big)\mid k+l\leq r, 0<k<\frac{r}{2}<l<r, k\leq\frac{l}{2}\right\}\\
&\cup \left\{\big(\frac{k}{r},\frac{l}{r}\big)^T\mid 0\leq k\leq\frac{l}{2}\leq l\leq \frac{r}{2}\right\}.
\end{align*}
Additionally the automorphism where $S=\left(\begin{smallmatrix}1&0\\1&-1\end{smallmatrix}\right)$ maps $\big(0,(\frac{k}{r},\frac{l}{r}),1\big)$ to 
\[\left(\big(0,(0,-1),0\big)\big(0,(\frac{k}{r},\frac{r-l+k}{r}),1\big)\right)^{-1}.\]
Thus the last row follows.

\paragraph{Uniqueness}
Now we want to see that $\xi_0$ from the list is uniquely determined.
Therefor, we use a proof by contradiction.

Assume that for an $r\in\mathds{N}\backslash\{0\}$, $(x,y)\in\mathds{F}_1$ and a $\lambda=\lambda_0+k\pi$, where $\lambda_0=\left\{\frac{\pi}{3},\frac{\pi}{2},\frac{2\pi}{3},\pi\right\}$ and $k\in\mathds{N}$, there are $\xi$ and $\tilde\xi$ from the list, such that there is an automorphism $\varphi$ of $Osc_1(\omega_r,\lambda B_{x,y})$, mapping $L(\xi)$ onto $L(\tilde\xi)$.
Then, moreover, this automorphism maps $\Gamma_r$ onto itself.

So we can check each $\Gamma_r$-preserving automorphism and will note that none of them maps $L(\xi)$ onto $L(\tilde\xi)$, and we get our contradiction.\\

\noindent\hspace*{20mm}%
Let us begin with $\lambda=\pi+2\pi k$, $k\in\mathds{N}$ and $r$ even.


Let $\varphi$ be an automorphism, given as in (\ref{AutOsc}) in Theorem \ref{theoremAutOsc}, which maps $L(\xi)$ onto $L(\tilde\xi)$.
Then the corresponding map \[\hat\varphi: \mathds{R}^2\rightarrow\mathds{R}^2,\quad\xi\mapsto S\xi+\e^Bb-b=S\xi-2b\] maps $\xi=(\xi_1,\xi_2)$ to a vector $\eta=(\eta_1,\eta_2)$, where $r\eta_{1/2}$ is even if and only if $\tilde\xi_{1/2}$ is even.


But the corresponding map $\hat\varphi$ for an $\Gamma_r$-preserving automorphism maps $(0,0)$ to a vector in $\frac{2}{r}\mathds{Z}^2$.
Thus there is a contradiction for $\tilde\xi\in\left\{(\frac{1}{r},0),(0,\frac{1}{r}),(\frac{1}{r},\frac{1}{r})\right\}$.

In an analogue way, we find a contradiction if $\xi=(\frac{1}{r},0)$. But we have to subdivide the proof, depending on the value of $x$ and $y$.\\
If $x^2+y^2>1$ and $x\neq \frac{1}{2}$, then each $\Gamma_r$-preserving automorphism gives a corresponding map $\hat\varphi$, which maps $(\frac{1}{r},0)$ to a vector in $(\frac{2}{r}\mathds{Z}+\frac{1}{r},\frac{2}{r}\mathds{Z})$.
Thus there is a contradiction for all $\tilde\xi\in\left\{(0,\frac{1}{r}),(\frac{1}{r},\frac{1}{r})\right\}$.
If $x^2+y^2>1$ and $x=\frac{1}{2}$, then each $\Gamma_r$-preserving automorphism gives a corresponding map $\hat\varphi$, which maps $(\frac{1}{r},0)$ to a vector in $(\frac{1}{r}\mathds{Z},\frac{2}{r}\mathds{Z})$.
Thus there is a contradiction for $\tilde\xi=(\frac{1}{r},\frac{1}{r})$.\\
If $x^2+y^2=1$ and $x\notin\big\{0,\frac{1}{2}\big\}$, then the corresponding map $\hat\varphi$ for every $\Gamma_r$-preserving automorphism maps $(\frac{1}{r},0)$ to a vector with one entry in $\frac{2}{r}\mathds{Z}$ and one in $\frac{2}{r}\mathds{Z}+\frac{1}{r}$. 
Thus there is a contradiction for $\tilde\xi=(\frac{1}{r},\frac{1}{r})$.\\
At last suppose $\xi=(0,\frac{1}{r})$ for $x^2+y^2>1$ and $x\neq \frac{1}{2}$.
Each $\Gamma_r$-preserving automorphism gives a corresponding map $\hat\varphi$, which maps $(0,\frac{1}{r})$ to a vector in $(\frac{2}{r}\mathds{Z},\frac{1}{r}\mathds{Z})$. Thus there is a contradiction for $(\frac{1}{r},\frac{1}{r})$.

Altogether, we verified that the $\xi_0$ from the list, which we refer to a lattice  $L(\xi)$ in $Osc_1(\omega_r,\lambda B_{x,y})$, where $\lambda=\pi+2\pi k$, $k\in\mathds{N}$, $(x,y)\in\mathds{F}_1$, is uniquely determined.\\

\noindent\hspace*{20mm}%
Now, we consider the case that $\lambda=2\pi k$, $k\in \mathds{N}\backslash\{0\}$, and get the same contradiction.
But, first of all, we note that $\xi\in\mathds{R}^2$ and $\tilde\xi$ are equivalent if and only if there are $t_1,t_2\in\mathds{Z}$ and an integer matrix \[S\in\left\{\e^{tB},\e^{tB}\left(\begin{smallmatrix}1&-2x\\0&-1\end{smallmatrix}\right)\mid t\in\mathds{R}\right\}\] with $\det S=\mu$, such that $\tilde\xi=\mu S\xi+t_1e_1+t_2e_2$.


So it suffices to fix a $t_1$ and $t_2$ for each integer matrix $S\in\left\{ \e^{tB},\e^{tB}\left(\begin{smallmatrix}1&-2x\\0&-1\end{smallmatrix}\right)\mid t\in\mathds{R}\right\}$ and each $\xi$, such that $\mu S\xi+t_1e_1+t_2e_2\in[0,1)^2$ and show that $\mu S\xi+t_1e_1+t_2e_2$ is equal to $\xi$ or not an element in the set from the list.

We consider the case that $(x,y)=(0,1)$.
Note that $\pm E_2$, $\pm \left(\begin{smallmatrix}0&-1\\1&0\end{smallmatrix}\right)$, $\pm \left(\begin{smallmatrix}0&1\\1&0\end{smallmatrix}\right)$ and $\pm \left(\begin{smallmatrix}1&0\\0&-1\end{smallmatrix}\right)$ are the only integer matrices in $\left\{ \e^{tB},\e^{tB}\left(\begin{smallmatrix}1&-2x\\0&-1\end{smallmatrix}\right)\mid t\in\mathds{R}\right\}$ for this case.

We will denote by $M$ the set $M:=\left\{\big(\frac{k'}{r},\frac{l'}{r}\big) | 0\leq k \leq l \leq \frac{r}{2} \right\}$ and set $\xi=(\frac{k}{r},\frac{l}{r})\in M$.
\begin{itemize}
\item If $S=E_2$, then $S\xi=\xi$.
\item Let $S=-E_2$.
For the cases that $k=l=0$, $k=l=\frac{r}{2}$, or $k=0$ and $l=\frac{r}{2}$, we get $S\xi=\xi$, $S\xi+e_1+e_2=\xi$ or $S\xi+e_1=\xi$ respectively.
If $0<k,l<\frac{r}{2}$, then $S\xi+e_1+e_2=(\frac{r-k}{r},\frac{r-l}{r})\in[0,1)^2$, but $\frac{r-k}{r}>\frac{r}{2}$.
At last, if $k=0$ and $0<l<\frac{r}{2}$, then $S\xi+e_1=(0,\frac{r-l}{r})\in[0,1)^2$, but $\frac{r-l}{r}>\frac{r}{2}$.
\item Let $S=\left(\begin{smallmatrix}0&-1\\1&0\end{smallmatrix}\right)$.
First of all, we see that $S\xi+e_1=\xi$, respectively $S\xi=\xi$ for $k=l=\frac{r}{2}$ or $k=l=0$.
If $l\notin\{0,\frac{r}{2}\}$, then $S\xi+e_1=(\frac{r-l}{r},\frac{k}{r})\in[0,1)^2$, but $\frac{r-l}{r}>\frac{1}{2}$.
If $l=\frac{r}{2}$ and $k<\frac{r}{2}$, then $S\xi+e_1=(\frac{1}{2},\frac{k}{r})\in[0,1)^2$, but $\frac{k}{r}<\frac{1}{2}$.
\item Similar arguments apply to the case $S=\left(\begin{smallmatrix}0&1\\-1&0\end{smallmatrix}\right)$.
\item Let $S=-\left(\begin{smallmatrix}0&1\\1&0\end{smallmatrix}\right)$.
It follows that $-S\xi=(\frac{l}{r},\frac{k}{r})\in[0,1)^2$.
Hence, we get $(\frac{l}{r},\frac{k}{r})\notin M$ for $k<l$, and $-S\xi=\xi$, for $k=l$.
\item Let $S=\left(\begin{smallmatrix}0&1\\1&0\end{smallmatrix}\right)$.
We see again that $-S\xi=\xi$ for $k=l=0$.
If $k=0$ and $l>0$, then $-S\xi+e_1\in[0,1)^2$, but $\frac{r-l}{r}>0$.
For $k,l\neq 0$ we get $-S\xi+e_1+e_2=(\frac{r-l}{r},\frac{r-k}{r})\in[0,1)^2$. If, additionally, $k<\frac{r}{2}$, then $\frac{r-k}{r}>\frac{r}{2}$.
If, however, $k=l=\frac{r}{2}$, then $(\frac{r-l}{r},\frac{r-k}{r})=\xi$.
\item Let $S=\left(\begin{smallmatrix}1&0\\0&-1\end{smallmatrix}\right)$.
If $k\notin\{0,\frac{r}{2}\}$, then $-S\xi+e_1=(\frac{r-k}{r},\frac{l}{r})\in[0,1)^2$, but $\frac{r-k}{r}>\frac{r}{2}$.
If $k=l=\frac{r}{2}$, then $-S\xi+e_1=\xi$ and if $k=0$, then $-S\xi=\xi$.
\item The same reasoning applies to the case $S=\left(\begin{smallmatrix}-1&0\\0&1\end{smallmatrix}\right)$.
\end{itemize} 
Finally, for $(x,y)=(0,1)$ it follows that $(\frac{k}{r},\frac{l}{r})\in M$ and $(\frac{k'}{r},\frac{l'}{r})\in M$ are equivalent, if and only if $k=k'$ and $l=l'$.

The rest of the case $\lambda=2\pi k$ runs as before.\\

\noindent\hspace*{20mm}%
For $\lambda=\lambda_0+k\pi$, where $\lambda_0\in\left\{\frac{\pi}{3},\frac{\pi}{2},\frac{2\pi}{3}\right\}$ and $k\in\mathds{N}$, we use some other way to prove the assertion.
First of all we begin with a definition.
\begin{definition}\label{OG}
A group, generated by four elements $\{\alpha,\beta,\gamma,\delta\}$ is called O-lattice, if:
\begin{itemize}
\item There is an $r\in\mathds{N}\backslash\{0\}$, such that $\alpha\beta\alpha^{-1}\beta^{-1}=\gamma^r$.
\item There is a $k\in\mathds{N}\backslash\{0\}$, such that $\delta^k$ and $\gamma$ generate the center of the group and
\item $\delta\alpha\delta^{-1}$ and $\delta\beta\delta^{-1}$ are both elements of $\langle\alpha,\beta,\gamma\rangle$.
\end{itemize}
\end{definition}

It is not hard to see that the computed lattices of $Osc_1(\omega_r,\lambda B_{x,y})$, where $\lambda\neq k\pi$, $k\in\mathds{N}$ satisfy Definition \ref{OG}.

Standard arguments yield the following lemma.
\begin{lemma}
Let $G$ be an O-lattice as in Definition \ref{OG}, $H$ a group and $\tilde{\varphi}:\left\{\alpha,\beta,\gamma,\delta\right\}\rightarrow H$ a map.
The map $\varphi:G\rightarrow H$, defined by \[\varphi(\alpha^x\beta^y\gamma^z\delta^t)=\tilde{\varphi}(\alpha)^x\tilde{\varphi}(\beta)^y\tilde{\varphi}(\gamma)^z\tilde{\varphi}(\delta)^t\]
for all $x,y,z,t\in\mathds{Z}$, is a homomorphism, if and only if:
\begin{itemize}
\item $\varphi(\delta)\varphi(\alpha)\varphi(\delta)^{-1}=\varphi(\delta\alpha\delta^{-1})$,
\item $\varphi(\delta)\varphi(\beta)\varphi(\delta)^{-1}=\varphi(\delta\beta\delta^{-1})$,
\item $\varphi(\gamma)$ is an element of the center of $H$ and
\item $\varphi(\alpha)\varphi(\beta)\varphi(\alpha)^{-1}\varphi(\beta)^{-1}=\varphi(\gamma)^r$.
\end{itemize}
\end{lemma}

\begin{remark}\label{kgleichl}
Let $G$ be an O-lattice as in Definition \ref{OG}.
Let, furthermore, $H$ be an O-lattice, generated by $\big\{\tilde{\alpha},\tilde{\beta},\tilde{\gamma},\tilde{\delta}\big\}$, where $\tilde{\gamma}$ and $\tilde{\delta}^l$ generate the center of $H$ and $\tilde{\alpha}\tilde{\beta}\tilde{\alpha}^{-1}\tilde{\beta}^{-1}=\tilde{\gamma}^s$.
Let $\varphi: G\rightarrow H$ be an isomorphism.
Then $\varphi(\gamma)=\gamma^{\pm 1}$. To see this, note that $\varphi(\gamma)\in\langle\tilde\gamma\rangle$, since $\gamma$ is in the center of $G$ and $\gamma^r$ in the commutator subgroup. Then using the bijectivity of $\varphi$ yields the assertion.
Furthermore, $\varphi(\delta^k)=\tilde\gamma^p\tilde\delta^{\pm l}$ for some $p\in\mathds{Z}$.
Indeed, since $\varphi$ is an isomorphism, there are $p,q,u,v\in\mathds{Z}$, such that $\varphi(\delta^k)=\tilde\gamma^p\tilde\delta^{ul}$ and $\varphi(\gamma^q\delta^{vk})=\tilde\delta^{l}$.
So we get $\delta^k=\varphi^{-1}(\varphi(\delta^k))=\gamma^{qu\pm p}\delta^{uvk}$
and the assertion holds.
In addition, it is straight forward to see that $\varphi$ maps $\delta$ to $\tilde{\alpha}^x\tilde{\beta}^y\tilde{\gamma}^z\tilde{\delta}^{\pm 1}$ for some $x,y,z\in\mathds{Z}$ and that $k=l$.
%
%
\end{remark}

\begin{lemma} \label{nichtIsom}
For a fixed $B=\lambda B_{0,1}$, where $\cos\lambda=0$ and an even $r$,
the lattices $L(\xi)$ and $L(\tilde\xi)$ of $Osc_1(\omega_r,B)$, where $\xi\neq\tilde\xi$ from the list, are not isomorphic as abstract groups.
\end{lemma}

\begin{proof}
We first prove the assertion for $\lambda=\frac{1}{2}\pi+2\pi k$ and $k\in\mathds{N}$.
We set $\alpha:=(0,e_1,0)$, $\beta:=(0,e_2,0)$, $\gamma:=(1,0,0)$, $\delta_0:=(0,0,1)$ and $\delta_1:=\big(0,(\frac{1}{r},0),1\big)$.
Using equation (\ref{LCapHeis}) and $\e^B=\left(\begin{smallmatrix}0&-1\\1&0\end{smallmatrix}\right)$ we get
\begin{align*}
\delta_0\alpha\delta_0^{-1}=\beta,\quad \delta_0\beta\delta_0^{-1}=\alpha^{-1},\quad \delta_1\alpha\delta_1^{-1}=\beta\gamma,\quad \delta_1\beta\delta_1^{-1}=\alpha^{-1}.
\end{align*}

Suppose that there is an isomorphism $\varphi$ from $\langle\alpha,\beta,\gamma,\delta_0\rangle$ onto $\langle\alpha,\beta,\gamma,\delta_1\rangle$.
Then 
$\varphi(\delta_0)=\alpha^x\beta^y\gamma^z\delta_1^{\pm 1}$ for some $x,y,z\in\mathds{Z}$.
Furthermore $\varphi(\alpha)=\alpha^{n_1}\beta^{n_2}\gamma^{n_3}$ for some $n_1,n_2,n_3\in\mathds{Z}$, since $\varphi(\delta_0)^2\varphi(\alpha)\varphi(\delta_0)^{-2}=\varphi(\alpha)^{-1}$ and 
$\delta_1\eta\delta_1^{-1}\in\langle\alpha,\beta,\gamma\rangle$ for every $\eta\in\langle\alpha,\beta,\gamma\rangle$.
On the same way, we get that $\varphi(\beta)=\alpha^{n'_1}\beta^{n'_2}\gamma^{n'_3}$ for some $n'_1,n'_2,n'_3\in\mathds{Z}$.

Let $\varphi(\delta_0)=\alpha^x\beta^y\gamma^z\delta_1^{1}$ (similar arguments apply to the case $\varphi(\delta_0)=\alpha^x\beta^y\gamma^z\delta_1^{-1}$).
Then 
\begin{align*}
\alpha^{n'_1}\beta^{n'_2}\gamma^{n'_3}=\varphi(\beta)&=\varphi(\delta_0)\varphi(\alpha)\varphi(\delta_0)^{-1}\\
&=\alpha^{-n_2}\beta^{n_1}\gamma^{n_1+n_3+r\mathcal{O}_1},
\end{align*}
for some $\mathcal{O}_1\in\mathds{Z}$.
Hence $n'_1=-n_2$, $n'_2=n_1$ and $n'_3=n_1+n_3+r\mathcal{O}_1$.
Similar computation for $\varphi(\delta_0)\varphi(\beta)\varphi(\delta_0)^{-1}$ shows that $-n_3=n'_1+n'_3+r\mathcal{O}_2$, for some $\mathcal{O}_2\in\mathds{Z}$.
Hence $-n_2+n'_3+n_3$ and $n_1+n_3-n'_3$ are even. 
Thus $n_1+n_2$ is even. 
This contradicts the bijectivity of $\varphi$.
So there is no isomorphism.

We consider the second case now.
For $B=\lambda B_{0,1}$, $\lambda=\frac{3}{2}\pi+2\pi k$, $k\in\mathds{N}$ and $r$ even we set $\tilde\alpha:=(0,e_1,0), \tilde\beta:=(0,e_2,0), \tilde\gamma:=(1,0,0)$, $\tilde\delta_0:=(0,0,1)$ and $\tilde\delta_1:=\big(0,(0,\frac{1}{r}),1\big)$.

The maps defined by
\begin{align*}
\tilde\alpha\mapsto\alpha^{-1},\quad\tilde\beta\mapsto\beta^{-1},\quad\tilde\gamma\mapsto\gamma,\quad\tilde\delta_i\mapsto\delta_i^{-1}
\end{align*}
are isomorphisms from the O-lattice $\big\langle\tilde\alpha,\tilde\beta,\tilde\gamma,\tilde\delta_i\big\rangle$ onto $\big\langle\alpha,\beta,\gamma,\delta_i\big\rangle$ for $i=0,1$.
Hence, the lemma follows, by using the first case of the proof.
\end{proof}

\begin{lemma}\label{Lr2}
For a fixed $B=\lambda B_{\frac{1}{2},\frac{\sqrt{3}}{2}}$, where $\cos\lambda=-\frac{1}{2}$ and an $r$ divisible by $6$, the lattices $L(\xi)$ and $L(\tilde\xi)$ of $Osc_1(\omega_r,B)$, where $\xi\neq\tilde\xi$ from the list, are not isomorphic as abstract groups.
\end{lemma}

\begin{proof}
We subdivide the proof into two parts and argue as in the proof of the previous lemma.

At first let $B=\lambda B_{\frac{1}{2},\frac{\sqrt{3}}{2}}$, where $\cos\lambda=-\frac{1}{2}$, $\sin\lambda=\frac{\sqrt{3}}{2}$ and let $r$ be divisible by $6$.
We set $\alpha:=(0,e_1,0), \beta:=(0,e_2,0), \gamma:=(1,0,0), \delta_0:=(0,0,1)$ and $\delta_1:=\big(0,(\frac{1}{r},0),1\big)$.
Using equation (\ref{LCapHeis}) and $\e^{B}=\left(\begin{smallmatrix}0&-1\\1&-1\end{smallmatrix}\right)$ gives:
\begin{align*}
\delta_0\alpha\delta_0^{-1}=\beta,\quad
\delta_0\beta\delta_0^{-1}=\alpha^{-1}\beta^{-1}\gamma^{-\frac{r}{2}},\quad\delta_1\alpha\delta_1^{-1}=\beta\gamma,\quad
\delta_1\beta\delta_1^{-1}=\alpha^{-1}\beta^{-1}\gamma^{-\frac{r}{2}-1}.
\end{align*}

Suppose that there is an isomorphism $\varphi$ from $\langle\alpha,\beta,\gamma,\delta_0\rangle$ onto $\langle\alpha,\beta,\gamma,\delta_1\rangle$.

Then $\varphi(\delta_0)\varphi(\alpha)\varphi(\delta_0)^{-1}=\varphi(\delta_0\alpha\delta_0^{-1})=\varphi(\beta).$\\
Let $\varphi(\delta_0)=\alpha^x\beta^y\gamma^z\delta_1$ (The case $\varphi(\delta_0)=\alpha^x\beta^y\gamma^z\delta_1^{-1}$ runs similar).\\

Since $\delta_0\eta\delta_0^{-1}\in\langle\alpha,\beta,\gamma\rangle$ for all $\eta\in\langle\alpha,\beta,\gamma\rangle$, one can check that $\varphi(\alpha)=\alpha^{m_1}\beta^{m_2}\gamma^{m_3}$ for some $m_1,m_2,m_3\in\mathds{Z}$.

Thus
\begin{align*}
\varphi(\beta)&=\varphi(\delta_0)\varphi(\alpha)\varphi(\delta_0^{-1})\\
	&=\alpha^{-m_2}\beta^{m_1-m_2}\gamma^{m_1+m_2(-r/2-1)+m_3+r\mathcal{O}_1},
\end{align*}
where $\mathcal{O}_1\in\mathds{Z}$.

Furthermore, we obtain
\begin{align*}
\gamma^{\pm r}&=\varphi(\alpha)\varphi(\beta)\varphi(\alpha)^{-1}\varphi(\beta)^{-1}\\
&=\gamma^{r(m_1^2-m_1m_2+m_2^2)}.
\end{align*}
Hence, $\varphi(\gamma)=\gamma$ and $(m_1,m_2)\in\big\{(1,0),(-1,0),(0,1),(0,-1),(1,1),(-1,-1)\big\}$.

In addition
\begin{align*}
&\alpha^{m_2-m_1}\beta^{-m_1}\gamma^{-m_1-m_2(-\frac{r}{2}-1)-2m_3-\frac{r}{2}+r\mathcal{O}_3}\\
&=\varphi(\alpha)^{-1}\varphi(\beta)^{-1}\varphi(\gamma)^{-\frac{r}{2}}\\
&=\varphi(\delta_0)\varphi(\beta)\varphi(\delta_0)^{-1}\\
&=\alpha^x\beta^y\gamma^z\alpha^{-m_2}\beta^{m_1-m_2}\gamma^{m_1+m_2(-\frac{r}{2}-1)+m_3+r\mathcal{O}_1}\delta_1^{-1}\beta^{-y}\alpha^{-x}\\
&=\alpha^{m_2-m_1}\beta^{-m_1}\gamma^{-m_2-\frac{r}{2}m_1+m_3+r\mathcal{O}_2},
\end{align*}
where $\mathcal{O}_2,\mathcal{O}_3\in\mathds{Z}$.
Thus $3m_3=\frac{r}{2}m_1-m_1+\frac{r}{2}m_2+2m_2-\frac{r}{2}+r\mathcal{O}_4$, for some $\mathcal{O}_4\in\mathds{Z}$.
Since $m_3\in\mathds{Z}$, the term $-m_1+2m_2$ must be divisible by $3$.

But, contrarily, for $(m_1,m_2)\in\big\{(1,0),(0,1),(-1,0),(0,-1),(1,1),(-1,-1)\big\}$ it follows that $2m_2-m_1$ is not divisible by $3$.
Finally, the first case is shown.

For the second case let $\sin\lambda=-\frac{\sqrt{3}}{2}$.
We set $\tilde\alpha:=(0,e_1,0)$, $\tilde\beta:=(0,e_2,0)$, $\tilde\gamma:=(1,0,0)$, $\tilde\delta_0:=(0,0,1)$ and $\tilde\delta_1=\big(0,(0,\frac{1}{r}),1\big)$.
The maps defined by
\begin{align*}
\tilde\alpha\mapsto\beta,\quad\tilde\beta\mapsto\alpha,\quad\tilde\gamma\mapsto\gamma^{-1},\quad\tilde\delta_i\mapsto\delta_i
\end{align*}
are isomorphisms from the O-lattice $\big\{\tilde\alpha,\tilde\beta,\tilde\gamma,\tilde\delta_i\big\}$ onto 
$\{\alpha,\beta,\gamma,\delta_i\}$ for $i=0,1$.
Using the first part brings the assertion.
\end{proof}
%

\begin{lemma}
For a fixed $B=\lambda B_{\frac{1}{2},\frac{\sqrt{3}}{2}}$ where $\cos\lambda=-\frac{1}{2}$ and an odd $r$ divisible by $3$, the lattices $L(\xi)$ and $L(\tilde\xi)$ of $Osc_1(\omega_r,B)$, where $\xi\neq\tilde\xi$ from the list, are not isomorphic as abstract groups.
\end{lemma}

\begin{proof}
This follows by the same method as in Lemma \ref{Lr2}.
\end{proof}

Finally, Theorem \ref{Klassifik} is verified.
\end{proof}

\renewcommand{\arraystretch}{1.4} 
\begin{appendix}
\section{List of $\xi_0$}\label{AppA}
\begin{tabular}{|c|c|c|c|} \hline	

$\lambda$		& $x,y$ 			& $\xi_0$ for an even $r$ 		& $\xi_0$ for an odd $r$ 		\\
\hline 
\hline
$\frac{1}{3}\pi+2\pi k$ & $x=\frac{1}{2},y=\frac{\sqrt{3}}{2}$ & $\{(0,0)\}$ & $\{(\frac{1}{2r},0)\}$ \\ \hline

$\frac{5}{3}\pi+2\pi k$ & $x=\frac{1}{2},y=\frac{\sqrt{3}}{2}$ & $\{(0,0)\}$ & $\{(0,\frac{1}{2r})\}$ \\ \hline

$\frac{1}{2}\pi+2\pi k$ & $x=0,y=1$ & $\{(0,0),(0,\frac{1}{r})\}$ & $\{(0,0)\}$ \\ \hline

$\frac{3}{2}\pi+2\pi k$ & $x=0,y=1$ & $\{(0,0),(0,\frac{1}{r})\}$ & $\{(0,0)\}$ \\ \hline

\multirow{2}{*}{$\frac{2}{3}\pi+ 2\pi k$} & \multirow{2}{*}{$x=\frac{1}{2},y=\frac{\sqrt{3}}{2}$} & $r \equiv 0(3)$: $\{(0,0),(\frac{1}{r},0)\}$ & $r \equiv 0(3)$ : $\{(0,\frac{1}{2r}),(\frac{1}{r},\frac{1}{2r})\}$ \\ 

& & else: $\{(0,0)\}$& else: $\{(0,\frac{1}{2r})\}$\\ \hline

\multirow{2}{*}{$\frac{4}{3}\pi+ 2\pi k$} & \multirow{2}{*}{$x=\frac{1}{2},y=\frac{\sqrt{3}}{2}$} & $r \equiv 0(3)$: $\{(0,0),(\frac{1}{r},0)\}$ & $r \equiv 0(3)$: $\{(\frac{1}{2r},0),(\frac{1}{2r}+\frac{1}{r},0)\}$ \\ 
& & else: $\{(0,0)\}$& else: $\{(\frac{1}{2r},0)\}$\\ \hline

\multirow{4}{*}{$\pi+2\pi k$}	&	 $x^2+y^2>1,\,$ 		& 	\multirow{2}{*}{$\{(0,0),(\frac{1}{r},0),(0,\frac{1}{r}),(\frac{1}{r},\frac{1}{r})\}$} 	&	 \multirow{7}{*}{$\{(0,0)\}$} \\
					&	$x\neq\frac{1}{2 }$	&&\\  \cline{2-3}

& $x^2+y^2>1,$ & \multirow{2}{*}{$\{(0,0),(\frac{1}{r},0),(\frac{1}{r},\frac{1}{r})\}$} & \\  
&$x=\frac{1}{2}$ && \\ \cline{2-3}

& $x^2+y^2=1,$  & \multirow{2}{*}{$\{(0,0),(\frac{1}{r},0),(\frac{1}{r},\frac{1}{r})\}$} & \\ 
& $x\neq\frac{1}{2}$ & &\\ \cline{2-3}

& $x=\frac{1}{2},y=\frac{\sqrt{3}}{2}$ & $\{(0,0),(\frac{1}{r},\frac{1}{r})\}$ &	\\ \hline

\multirow{11}{*}{$2\pi k$} & $x^2+y^2>1$ & \multicolumn{2}{|c|}{\multirow{2}{*}{$\{(\frac{k}{r},\frac{l}{r}) | 0\leq k,l\leq \frac{r}{2}\}\cup \{(\frac{k}{r},\frac{l}{r})| 0<k<\frac{r}{2}<l<r \}$}} \\ 

& $x\notin\{0,\frac{1}{2}\}$ & \multicolumn{2}{|c|}{} \\ \cline{2-4}

& $x^2+y^2>1$ & \multicolumn{2}{|c|}{\multirow{2}{*}{$\{(\frac{k}{r},\frac{l}{r})\mid 0\leq k, l\leq \frac{r}{2}\}$}} \\ 
 & $x=0$ &  \multicolumn{2}{|c|}{ } \\ \cline{2-4}

& $x^2+y^2>1$ & \multicolumn{2}{|c|}{$\{(\frac{k}{r},\frac{l}{r})\mid 0\leq ,\,k\leq\frac{l}{2}\leq l\leq \frac{r}{2}\}\cup \{(\frac{k}{r},0)\mid 0\leq k\leq\frac{r}{2}\}\cup$} \\ 

& $x=\frac{1}{2}$ &\multicolumn{2}{|c|}{$\{(\frac{k}{r},\frac{l}{r})\mid <k<\frac{r}{2}<l<r,\,k\leq\frac{l}{2}\}$}\\ \cline{2-4}

& $x^2+y^2=1$ & \multicolumn{2}{|c|}{\multirow{2}{*}{$\{(\frac{k}{r},\frac{l}{r}) | 0\leq k\leq l\leq \frac{r}{2}\}\cup \{(\frac{k}{r},\frac{l}{r}) | 0< k<\frac{r}{2}<l<r, k+l\leq r\}$}} \\ 

&  $x\notin\{0,\frac{1}{2}\}$  &  \multicolumn{2}{|c|}{}  \\ \cline{2-4}

& $x=0,y=1$ & \multicolumn{2}{|c|}{$\{(\frac{k}{r},\frac{l}{r}) | 0\leq k \leq l \leq \frac{r}{2} \}$} \\ \cline{2-4}

& $x=\frac{1}{2},y=\frac{\sqrt{3}}{2}$ & \multicolumn{2}{|c|}{$\{(\frac{k}{r},\frac{l}{r}) | 0 \leq 2k\leq l\leq \frac{k+r}{2}\}$} \\ \hline

\end{tabular}
\end{appendix}

\linespread{1.25}

\printbibliography



\end{document}